\newcommand{\cC}{\ensuremath{\mathcal{C}}}
\newcommand{\cE}{\ensuremath{\mathcal{E}}}
\newcommand{\Cat}{\ensuremath{\mathsf{Cat}}}
\newcommand{\Ext}{\ensuremath{\mathsf{Ext}}}
\newcommand{\Gpd}{\ensuremath{\mathsf{Gpd}}}
\newcommand{\Fib}{\ensuremath{\mathsf{Fib}}}
\newcommand{\KFib}{\ensuremath{\mathsf{Fib}_\mathcal{C}}}
\newcommand{\OpFib}{\ensuremath{\mathsf{OpFib}}}
\newcommand{\PsFib}{\ensuremath{\mathsf{PsFib}}}
\newcommand{\ps}{\ensuremath{/\!/}}
\newcommand{\op}{\ensuremath{\mathrm{op}}}
\author{Alan S.~Cigoli, Sandra Mantovani, and Giuseppe Metere}
\title{Discrete and conservative reflections of fibrations}
\newtheorem{Theorem}{Theorem}[section]
\newtheorem{Lemma}[Theorem]{Lemma}
\newtheorem{Definition}[Theorem]{Definition}
\newtheorem{Proposition}[Theorem]{Proposition}
\newtheorem{Corollary}[Theorem]{Corollary}
\newtheorem{Remark}[Theorem]{Remark}
\newtheorem{Example}[Theorem]{Example}
\begin{document}

\maketitle

\begin{abstract}
We focus on two factorization systems for opfibrations in the 2-category $\Fib(B)$ of fibrations over a fixed base category $B$. The first one is the internal version of the so called \emph{comprehensive factorization}, where the right orthogonal class is given by internal discrete opfibrations. The second one has as its right orthogonal class internal opfibrations in groupoids, i.e.~with groupoidal fibres.
These factorizations can be obtained by means of a single step 2-colimit. Namely, their left orthogonal parts are nothing but suitable coidentifiers and coinverters respectively. We will show how these results follow from their analogues in \Cat. To this end, we first provide suitable conditions on a 2-category \cC, allowing the transfer of the construction of coinverters and coidentifiers from \cC~to $\Fib_{\cC}(B)$.
\end{abstract}

\textbf{Keywords}: internal fibration, factorization, coidentifier, coinverter.

\textbf{MSC}: 18A32, 18D05, 18D30.

\section{Introduction}

The starting point of the work \cite{CMMV17} was to study from a fibrational point of view the notion of \emph{regular span}, introduced by Yoneda in \cite{Y60} as a formal categorical setting in order to reformulate the classical theory of $\Ext^n$ functors. The results in \cite{CMMV17} reveal that a regular span $S$
is nothing but a cartesian functor with codomain a product projection:
\[
\xymatrix@!=4ex{
X \ar[dr]_{S_2} \ar[rr]^S & & A \times B \ar[dl]^{pr_2} \\
& B
}
\]
which is in addition an internal opfibration in $\Cat/B$ (i.e.~a \emph{fibrewise opfibration}---see Definition \ref{def:fw}). This enlights also the difference between regular spans and two-sided fibrations, which in turn were characterized in \cite{BournPenon} as internal opfibrations, with codomain a product projection, in the 2-category $\Fib(B)$ of fibrations over $B$.

This fibrational interpretation makes it possible to reformulate Yoneda's Classification Theorem of \cite{Y60} as the result of a canonical factorization, yielding a reflection of regular spans into profunctors, i.e.\ two-sided \emph{discrete} fibrations. Actually, in \cite{CMMV17} it is shown that such a factorization exists not only for regular spans, but for any fibrewise opfibration $p$ with codomain a split fibration. Moreover, it turns out that this factorization, performed via a coidentifier (Lemma 3.12 in [\emph{loc.~cit.}]), is the internal version in $\Fib(B)$ of the \emph{comprehensive factorization} introduced in \cite{StW} for \Cat. In other words, $p$ is the composite of an initial morphism in $\Fib(B)$ with an internal discrete opfibration, which is the same as a fibrewise opfibration whose fibres are discrete (Corollary 2.9 in \cite{CMMV17}).

It is natural to ask if, for fibrewise opfibrations, replacing the coidentifier with a coinverter in the construction above, we get as a comparison a fibrewise opfibration whose fibres are groupoids. To this end, we need first to detect some sufficient conditions to transfer the construction of coidentifiers and coinverters from a 2-category \cC\ to the 2-category $\KFib(B)$ of internal fibrations over a fixed object $B$ (see Proposition \ref{prop:coinv} and Proposition \ref{prop:coid}). This happens when the 2-monad $R\colon\cC/B\to\cC/B$, whose pseudo-algebras define internal fibrations (in the sense of Street \cite{Street74}), preserves coidentifiers and coinverters of identees. Under this assumption $(\dagger)$, the coinverter (coidentifier) $q$ of the identee $\kappa$ of a fibration $f$ 
\begin{equation} \label{fact}
\xymatrix@!=5ex{
K \ar@/^2ex/[r]^{d_0}_{}="1" \ar@/_2ex/[r]_{d_1}^{}="2" \ar@{=>}"1";"2"^{\kappa} & A \ar[d]_{f} \ar[r]^{q} & Q \ar[dl]^{s} \\
& B
}
\end{equation}
induces a  comparison morphism $s$, which is still a fibration in \cC\  (see Corollary \ref{cor:fib.to.frac} and Corollary \ref{cor:fib.to.sfrac}). 

It is important to point out that, for such an $s$, as for any other isofibration, having groupoidal fibres is the same as being conservative, as we show in Corollary \ref{cor:isofib.cons=gpd(isofib)} in the general context of a finitely complete 2-category. 

Now we have two facts. First, we prove that the above described behaviour of coinverters (coidentifiers) for  fibrations (and opfibrations) holds in \Cat\ and in $\Fib(B)$, for any  $B$, precisely because in these cases, condition $(\dagger)$ and its dual $(\dagger')$ are fulfilled (Corollary \ref{cor:cat.dag} and Proposition \ref{prop:dag.to.fib}).

On the other hand,  it is known that any functor can be factorized through a conservative functor, thanks to a factorization system for \Cat\ which is obtained by a (possibly transfinite) iteration of the invertee/coinverter construction. Actually, we show in Corollary \ref{cor:isofib.cons=gpd(isofib)} that for any isofibration, the coinverter of the invertee is the same as the coinverter of the identee, so that our factorization $f=sq$ in diagram (\ref{fact}) realizes the first step of the above mentioned  construction. Moreover, this first step is sufficient to produce the desired factorization for each fibration (respectively opfibration) $f$ in \Cat: considering the construction (\ref{fact}) for such an $f$, $s$ turns out to be a fibration (resp. opfibration)  in groupoids, i.e.\ conservative (Proposition \ref{prop:isocompr.cat}).

The same phenomenon occurs for the factorization system in \Cat\ given by (sequence of coidentifiers, discrete functor), which, when restricted to fibrations (opfibrations), reduces to a single application of the identee/coidentifier construction and it realizes the comprehensive factorization. This result is proved in Proposition  \ref{prop:comprehensive.cat} in the case of a 2-category $\Cat(\cE)$ of internal categories where the construction of the comprehensive factorization of any functor provided in \cite{StV} is still valid, as, for example, when \cE\ is a finitely cocomplete locally cartesian closed category.

In the last part of the paper, we extend the above results to  fibrewise opfibrations in $\Fib(B)$, relying on the pseudo-functorial interpretation  of opfibrations in \Cat. 
This way,  we prove in Proposition \ref{prop:refl.gpd.fib} that every fibrewise (resp.~internal) opfibration $p$ in $\Fib(B)$ admits a factorization
\[
\xymatrix@!=5ex{
A \ar@/^3ex/[rr]^{p} \ar[dr]_{f} \ar[r]_-{q'} & Q' \ar[d]_(.4){h'} \ar[r]_-{s'} & C \ar[dl]^{g} \\
& B
}
\]
where $q'$ is the coinverter of the identee of $p$ and $s'$ is a fibrewise (resp.~internal) opfibration in groupoids in $\Fib(B)$. Such a factorization coincides with the one given by (sequence of coinverters, conservative functor) in \Cat.

The analogous results hold when replacing coinverters with coidentifiers and opfibrations in groupoids with discrete opfibrations (Proposition \ref{prop:comprehensive.int}).

\section{Review of internal fibrations}

Let \cC\ be a finitely complete 2-category \cite{Street76}. For a fixed object $B$ in \cC, we shall denote by $\cC/B$ the comma 2-category over $B$ and by $\cC\ps B$ the pseudo-comma 2-category over $B$.

We shall denote as follows the (strict) comma objects in \cC\ of identities, along identities on the left and on the right respectively, and iso-comma along identities:
\begin{equation} \label{diag:commas}
\begin{aligned}
\xymatrix@!=4ex{
B/B \ar[d]_{d_0} \ar[r]^-{d_1} & B \ar[d]^{1} & B/f \ar[r]^-{d_1} \ar[d]_{Rf} & A \ar[d]^f & f/B \ar[r]^-{Lf} \ar[d]_{d_0} & B \ar[d]^{1} & f/_{\cong} B \ar[r]^-{If} \ar[d]_{w_f} & B \ar[d]^1 \\
B \ar[r]_{1} \ar@{}[ur]|(.3){}="1" \ar@{}[ur]|(.7){}="2" \ar@{=>}"1";"2"_{\mu_B} & B & B \ar[r]_{1} \ar@{}[ur]|(.3){}="3" \ar@{}[ur]|(.7){}="4" \ar@{=>}"3";"4"_{\varphi_f} & B & A \ar[r]_{f} \ar@{}[ur]|(.3){}="5" \ar@{}[ur]|(.7){}="6" \ar@{=>}"5";"6"_{\psi_f} & B & A \ar[r]_{f} \ar@{}[ur]|(.3){}="7" \ar@{}[ur]|(.7){}="8" \ar@{=>}"7";"8"_{\omega_f}^{\sim} & B
}
\end{aligned}
\end{equation}
One can extend the assignment $f\mapsto Lf$ to 1-cells and 2-cells in $\cC\ps B$ in the following way, yielding a 2-functor:
\begin{itemize}
 \item on 1-cells:
 \[
 \xymatrix@!=6ex{
 A \ar[dr]_f^{}="1" \ar[r]^t & A' \ar[d]^{f'} \ar@{}[dl]|(.15){}="2" \ar@{=>}"2";"1"_{\theta}^{\sim} & \mapsto & f/B \ar[dr]_{Lf}^{}="1" \ar[r]^{Lt} & f'/B \ar[d]^{Lf'} \ar@{}[dl]|(.15){}="2" \ar@{=>}"2";"1"_{1} \\
 & B & & & B
 }
 \]
 where $Lt$ is determined by the equations $d_0(Lt)=td_0$ and $\psi_{f'}(Lt)=\psi_f\cdot\theta d_0$;
 \item on 2-cells:
 \[
 \xymatrix@!=6ex{
 A \ar[dr]_f^{}="1" \ar@/^4ex/[r]^{t_1}_{}="3" \ar[r]_{t_2}^{}="4" \ar@{=>}"3";"4"^{\xi} & A' \ar[d]^{f'} \ar@{}[dl]|(.15){}="2" \ar@{=>}"2";"1"^{\theta_2} & \mapsto & f/B \ar[dr]_{Lf}^{}="1" \ar@/^4ex/[r]^{Lt_1}_{}="3" \ar[r]_{Lt_2}^{}="4" \ar@{=>}"3";"4"^{L\xi}  & f'/B \ar[d]^{Lf'} \ar@{}[dl]|(.15){}="2" \ar@{=>}"2";"1"^{1} \\
 & B & & & B
 }
 \]
 where $L\xi$ is determined by the equations $(Lf')(L\xi)=1$ and $d_0(L\xi)=\xi d_0$.
\end{itemize}

One can also define a pseudo 2-natural tranformation $u\colon 1_{\cC\ps B}\to L$ and a (strict) 2-natural transformation $m\colon L^2\to L$ as follows:
\begin{itemize}
 \item on objects: for each $f\colon A \to B$,
 \[
 \xymatrix@!=6ex{
 A \ar[dr]_f^{}="1" \ar[r]^-{u_f} & f/B \ar[d]^{Lf} \ar@{}[dl]|(.15){}="2" \ar@{=>}"2";"1"_{1} & & Lf/B \ar[dr]_{L^2f}^{}="1" \ar[r]^-{m_f} & f/B \ar[d]^{Lf} \ar@{}[dl]|(.15){}="2" \ar@{=>}"2";"1"_{1} \\
 & B & & & B
 }
 \]
 where $u_f$ and $m_f$ are determined by the equations
\[
\left\{
\begin{array}{l}
d_0u_f=1 \\
\psi_fu_f=1
\end{array}
\right.
\quad\mbox{and}\quad
\left\{
\begin{array}{l}
d_0m_f=d_0d_0 \\
\psi_fm_f=\psi_{Lf}\cdot\psi_fd_0
\end{array}
\right.
\]
respectively.
 \item on 1-cells: for each $(t,\theta)$ as above,
\[
\xymatrix{
A \ar[d]_{t} \ar[r]^-{u_f} & f/B \ar[d]^{Lt} \\
A' \ar[r]_{u_{f'}} \ar@{}[ur]|(.3){}="1" \ar@{}[ur]|(.7){}="2" \ar@{=>}"2";"1"_{u_{(t,\theta)}}^{\sim} & f'/B
}
\]
is uniquely determined by the equations $d_0u_{(t,\theta)}=1$ and $(Lf')u_{(t,\theta)}=\theta$, while $m_{(t,\theta)}=1$.
\end{itemize}
One can check that these data satisfy the properties
\begin{enumerate}
 \item[M1] $m\ast uL=1=m\ast Lu$;
 \item[M2] $m\ast mL=m\ast Lm$.
\end{enumerate}
Hence the triple $(L,u,m)$ forms a 2-monad on $\cC\ps B$, which is not strict only because of the invertible 2-cells $u_{(t,\theta)}$ for each $(t,\theta)$.

Let us now consider, for each $f\colon A \to B$, the 2-cell $\kappa_f\colon u_fd_0\to 1_{f/B}$ determined by the equations $d_0\kappa_f=1$ and $(Lf)\kappa_f=\psi_f$, and define
\[
\xymatrix{
f/B \ar@/^3ex/[rr]^{Lu_f}_{}="1" \ar@/_3ex/[rr]_{u_{Lf}}^{}="2" \ar@{=>}"1";"2"_{\lambda_f} & & Lf/B
}
\]
as the unique 2-cell such that $d_0\lambda_f=\kappa_f$ and $(L^2f)\lambda_f=1$.

It is tedious but straightforward to check that the collection of all $\lambda_f$ for each $f$ gives rise to a modification $\lambda\colon Lu\to uL$ between natural tranformations in $\cC\ps B$. Moreover, $\lambda$ satisfies the following properties:
\begin{enumerate}
 \item[KZ1] $\lambda\ast u=1$;
 \item[KZ2] $m\ast\lambda=1$;
 \item[KZ3] $m\ast Lm\ast\lambda L=1$.
\end{enumerate}
Since M1 (=KZ0) already holds, the data $(L,u,m,\lambda)$ give rise to a KZ-doctrine in the sense of Definition 1.1 in \cite{Kock}. In other words, this structure provides a lax-idempotent 2-monad.

Let us observe that, in fact, $L\colon \cC\ps B \to \cC\ps B$ factors through the inclusion of $\cC/B$ in $\cC\ps B$, and the 1-cell components of $u$ are such that $u_{(t,1)}=1$, so that the above 2-monad on $\cC\ps B$ restricts to a \emph{strict} 2-monad on $\cC/B$, which is also part of a KZ-doctrine by the same $\lambda$. We will adopt the same notation for both monads as far as no confusion arises.
\medskip

Like $L$, also the 2-functors $R$ and $I$ on $\cC\ps B$, defined by the corresponding comma squares in (\ref{diag:commas}), can be endowed with a structure of 2-monad, which is colax-idempotent in the case of $R$ and pseudo-idempotent in the case of $I$. In both cases, these structures restrict to strict 2-monads $(R,v,n,\rho)$ and $(I,i,l,\iota)$ on $\cC/B$.

One of the most important features of KZ-doctrines is that the corresponding (pseudo-)algebra structures are unique up to isomorphism for each object and they are characterized as right (pseudo-)inverse left adjoint to the unit component of the monad. Applying this observation and its dual to the special cases of the 2-functors $L$, $R$ and $I$ described above, one can characterize (pseudo-)fibrations (and dually opfibrations) and isofibrations in \cC.

%
\begin{Proposition} \label{prop:fib}
For a morphism $f\colon A \to B$ in \cC\ the following conditions are equivalent and define an internal \emph{fibration} (respectively \emph{pseudo-fibration}):
\begin{enumerate}
\item
\begin{itemize}
\item[(i)] For all $X$ in \cC, $\cC(X,f)\colon\cC(X,A)\to\cC(X,B)$ is a fibration (respectively pseudo-fibration) in \Cat;
\item[(ii)] for all $g\colon Y \to X$, the commutative square below is a morphism of fibrations (respectively pseudo-fibrations) in \Cat:
\[
\xymatrix{
\cC(X,A) \ar[r]^-{\cC(g,A)} \ar[d]_{\cC(X,f)} & \cC(Y,A) \ar[d]^{\cC(Y,f)} \\
\cC(X,B) \ar[r]_-{\cC(g,B)} & \cC(Y,B)
}
\]
\end{itemize}
\item $f$ admits a structure of pseudo-algebra for the 2-monad $R\colon \cC/ B\to\cC/ B$ (respectively $R\colon \cC\ps B\to\cC\ps B$);
\item The morphism $v_f\colon f \to Rf$ admits a right adjoint in $\cC/ B$ (respectively $\cC\ps B$);
\item (Chevalley criterion) The morphism $f_1\colon A/A\to B/f$, determined by the equations
\[
\left\{
\begin{array}{l}
(Rf)f_1=fd_0 \\
d_1f_1=d_1 \\
\varphi_ff_1=f\mu_A
\end{array}
\right.
\]
admits a right adjoint in \cC\ with counit an identity (respectively isomorphism).
\end{enumerate}
\end{Proposition}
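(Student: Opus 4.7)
The plan is to establish a cycle of implications threading through the KZ-doctrine machinery just developed. Concretely, I would prove $(2)\Leftrightarrow(3)$ directly from the general theory of colax-idempotent 2-monads, $(1)\Leftrightarrow(2)$ by representability, and $(3)\Leftrightarrow(4)$ by unpacking comma objects to translate an adjunction in the slice $\cC/B$ into one in $\cC$. The ``respectively'' parenthetical is handled uniformly by replacing $\cC/B$ with $\cC\ps B$ throughout: strict adjunctions/algebras give fibrations, pseudo ones give pseudo-fibrations.

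For $(2)\Leftrightarrow(3)$, the monad $(R,v,n,\rho)$ is colax-idempotent, dually to the KZ-doctrine $(L,u,m,\lambda)$ explicitly verified before the statement. By the standard characterization of (pseudo-)algebras for lax- or colax-idempotent 2-monads (\cite{Kock}), an object $f$ of $\cC/B$ (resp.\ $\cC\ps B$) admits a pseudo-algebra structure if and only if the unit component $v_f\colon f\to Rf$ has a right adjoint in the slice, with the counit an identity in the strict case and an isomorphism in the pseudo case. The algebra structure map is precisely this right adjoint, and the conditions M1--M2 together with the modification $\rho$ force the triangle identities.

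For $(1)\Leftrightarrow(2)$, I would use that every representable $\cC(X,-)\colon\cC\to\Cat$ is a finite-limit-preserving 2-functor, hence sends the comma square defining $Rf$ to the comma square defining the corresponding 2-monad on $\Cat/\cC(X,B)$, whose pseudo-algebras are exactly the (pseudo-)fibrations in \Cat. Thus a pseudo-algebra structure for $R$ at $f$ is sent to a coherent family of pseudo-algebra structures on $\cC(X,f)$, and 2-naturality in $X$ translates exactly into the condition (ii) that precomposition with any $g\colon Y\to X$ preserves cartesian liftings. Conversely, applying (i)--(ii) to the universal case $X=A$ with the identity of $A$ as universal element reconstructs the internal algebra structure.

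The main obstacle is $(3)\Leftrightarrow(4)$. The point is to identify a right adjoint of $v_f$ in $\cC/B$ with a right adjoint of $f_1\colon A/A\to B/f$ in $\cC$ satisfying the stated counit condition. One direction starts from a right adjoint $r\colon Rf\to f$ in the slice, and produces an adjoint to $f_1$ in $\cC$ by combining $r$ with the universal 1-cell $A\to A/A$ obtained from $\mu_A$, using the defining equations $(Rf)f_1=fd_0$, $d_1f_1=d_1$, $\varphi_f f_1=f\mu_A$ to identify domains and codomains. The reverse direction uses the equation $d_1 r=d_1$, valid because the counit is the identity on $d_1$, to show that $r$ actually lives over $B$ and thus provides an adjoint in the slice. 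The bulk of the work is bookkeeping on 2-cells: verifying that the units and counits match up through the respective universal properties, as in Street's original formulation in \cite{Street74}. Once this translation is in place, the strict versus pseudo distinction for the counit is preserved intact, closing the cycle.
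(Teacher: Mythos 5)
Your overall architecture (1$\Leftrightarrow$2 by representability, 2$\Leftrightarrow$3 by the ``structures are adjoint to units'' principle for (co)lax-idempotent 2-monads, 3$\Leftrightarrow$4 by translating the adjunction across the comma object) is sensible, and it differs from the paper mainly in that the paper takes the equivalences 1--3 as known from the literature and devotes its actual proof (the appendix) to the equivalence with 4, argued directly against condition 1 via cartesian 2-cells. However, your sketch contains a genuine error in the step 2$\Leftrightarrow$3: you claim the right adjoint of $v_f$ has ``counit an identity in the strict case and an isomorphism in the pseudo case''. Condition 3 imposes no such requirement, and it would be false: the counit components of $v_f\dashv r_f$ are exactly the chosen cartesian liftings (in \Cat, the maps $r_f(\beta)\to a$ over $\beta$), and demanding the counit be an identity forces $v_fr_f=1_{B/f}$, which fails already for $f=1_B$ as soon as $B$ has a non-identity arrow. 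The strict/pseudo distinction is carried by whether the adjunction lives in $\cC/B$ or in $\cC\ps B$ (equivalently, whether $fr_f=Rf$ strictly or up to coherent isomorphism), not by invertibility of the counit. A smaller slip of the same kind: in 1$\Rightarrow$2 the ``universal case'' is not $X=A$ with $1_A$; one must take $X=B/f$ and lift the comma 2-cell $\varphi_f$ at $d_1$ (and $X=A/A$, lifting $f\mu_A$, when aiming at 4).

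The second gap is in 3$\Leftrightarrow$4, which is precisely the part of the statement the paper has to prove (the strict, counit-an-identity Chevalley criterion is the new content). ``Combining $r$ with the universal 1-cell $A\to A/A$ obtained from $\mu_A$'' does not produce the right adjoint of $f_1$: read literally it gives the composite of $r_f\colon B/f\to A$ with the reflexivity section, which classifies the identity 2-cell on $r_f$, and then $f_1$ composed with it is $v_fr_f$, not $1_{B/f}$, so the counit could not be an identity. The correct candidate is the 1-cell $r\colon B/f\to A/A$ obtained from the 2-dimensional universal property of $A/A$ applied to the 2-cell $d_1\epsilon_f\colon r_f\Rightarrow d_1$ (equivalently, to a cartesian lifting of $\varphi_f$), i.e.\ $d_0r=r_f$, $d_1r=d_1$, $\mu_Ar=d_1\epsilon_f$; one then needs $fd_1\epsilon_f=\varphi_f$ (which follows by interchange from $(Rf)\epsilon_f=1$ and $\varphi_fv_f=1_f$) to conclude $f_1r=1_{B/f}$, and the unit, both triangle identities, and the uniqueness statements in the converse direction still have to be manufactured through the 2-dimensional universal properties of $A/A$ and $B/f$. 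That is exactly the nontrivial bookkeeping the appendix carries out; with the key classifying 2-cell unidentified and the verifications waved off as routine, the essential implication of the proposition remains unproved in your plan.
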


In practice, given an internal fibration according to the above definition 2, it is convenient to fix a corresponding pseudo-algebra structure once and for all (which in \Cat\ means to fix a cleavage). Accordingly, throughout the paper, $\Fib_{\cC}(B)$ will denote the 2-category whose objects are pseudo-algebras for the monad $R\colon\cC/ B\to\cC/ B$, whose 1-cells are strict pseudo-algebra morphisms, and with the obvious 2-cells (we shall write just $\Fib(B)$ for $\cC=\Cat$).

\begin{Remark}
{\rm
The definition of internal fibration (resp. pseudo-fibration) in a representable 2-category appears in the form 2 of Proposition \ref{prop:fib} in the works of Street \cite{Street74,StreetBI}. The characterizations 1 and 3 in Proposition \ref{prop:fib} are well-known and already present in the literature (see, for example, \cite{Weber}). As for the Chevalley criterion, it was first proved by Gray \cite{Gray} for fibrations in \Cat, while an internal version of it (for opfibrations) appears in \cite[Proposition 9]{Street74}, asking for the unit to be an isomorphism. As the following example shows, such a condition does not characterize opfibrations. In fact, it characterizes pseudo-opfibrations (see 3.17 in \cite{StreetBI}). This is the reason why we consider the characterization 4.\ also for internal (strict) fibrations. Since we could not find a proof of the latter in the literature, we provide it in the appendix for the sake of completeness.
}
\end{Remark}

\begin{Example}{\em
Let $\cC=\Cat$ and consider any functor $f\colon 1\to B$, where $1$ is the terminal category and $B$ is the groupoid with two objects and exactly one isomorphism between them. It is easy to see that $1/1\cong 1$, $f/B\cong B$, and these isomorphisms make the induced functor $f_1\colon 1/1\to f/B$ of the dual of Proposition \ref{prop:fib} 4.\ isomorphic to $f$. By uniqueness, the terminal functor $t\colon B \to 1$ is the only possible left adjoint to $f$, and $tf=1_1$, hence one can choose $1_{tf}$ as a counit. On the other hand, $ft$ is not equal, but isomorphic to $1_B$, so that $f$ and $t$ are actually adjoint with unit an isomorphism. But $f$  is not an opfibration, just a pseudo-opfibration.
}
\end{Example}

The equivalent conditions of Proposition \ref{prop:fib} may be easily adapted to define \emph{internal opfibrations} (respectively pseudo-opfibrations), for example replacing, in 2.~and 3., the monad $R$ with the monad $L$. For the reader's convenience, throughout the paper, most of the results are stated in terms of fibrations. The corresponding results for opfibrations can be obtained accordingly.

Before giving the characterization of isofibrations, we point out the following property which is specific for iso-comma squares and will be useful later on.

\begin{Lemma} \label{lemma:w_equiv}
For each $f\colon A \to B$ in \cC, the 1-cells $i_f$ and $w_f$ form an adjoint equivalence with, in particular, $w_fi_f=1_A$. This yields also an equivalence in $\cC\ps B$ with $(i_f,1)$ and $(w_f,\omega_f)$ as adjoint pair.
\end{Lemma}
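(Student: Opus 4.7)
The plan is to exploit the 2-dimensional universal property of the iso-comma $f/_\cong B$, together with the fact that the unit $i_f$ of the (pseudo-idempotent) 2-monad $I$ is determined, in analogy with the formulas given above for $u_f$, by the equations $w_f i_f = 1_A$, $(If)\, i_f = f$ and $\omega_f i_f = 1_f$. The first of these gives the unit of the prospective adjunction $i_f \dashv w_f$ as an identity, so it remains only to produce an invertible counit $\eta\colon i_f w_f \Rightarrow 1_{f/_\cong B}$ and verify the triangle identities.

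To build $\eta$, I apply the universal property: the 1-cell $1_{f/_\cong B}$ is classified by the triple $(w_f, If, \omega_f)$, whereas $i_f w_f$ is classified by $(w_f, fw_f, 1_{fw_f})$ using the three equations above. The pair $(1_{w_f}, \omega_f)$ is a compatible 2-cell between these classifying data, and so induces a unique $\eta$ characterized by $w_f \eta = 1_{w_f}$ and $(If)\,\eta = \omega_f$. Since $\omega_f$ is invertible, the same construction applied to $(1_{w_f}, \omega_f^{-1})$ yields a two-sided inverse of $\eta$. The triangle identities then follow by uniqueness: whiskering $\eta i_f$ with $w_f$ gives $1_{1_A}$ and with $If$ gives $\omega_f i_f = 1_f$, forcing $\eta i_f = 1_{i_f}$; the other triangle $w_f \eta = 1_{w_f}$ holds by construction. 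This establishes the adjoint equivalence in \cC.

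For the lift to $\cC\ps B$, both $(i_f, 1)\colon f \to If$ and $(w_f, \omega_f)\colon If \to f$ are 1-cells of the pseudo-slice (the latter uses invertibility of $\omega_f$), and their composites are $(1_A, 1)$ on one side and $(i_f w_f, \omega_f)$ on the other. The defining equation $(If)\,\eta = \omega_f$ is precisely the compatibility condition making $\eta$ a 2-cell in $\cC\ps B$ between $(i_f w_f, \omega_f)$ and the pseudo-slice identity, so the adjoint equivalence transports to $\cC\ps B$. The main obstacle is the bookkeeping around the 2-dimensional universal property of the iso-comma, but once the defining equations for $i_f$ are set up in analogy with those for $u_f$, everything reduces to the uniqueness arguments above.
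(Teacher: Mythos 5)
Your proof is correct and follows essentially the same route as the paper: $w_fi_f=1_A$ holds by the defining equations of $i_f$, the invertible 2-cell $i_fw_f\Rightarrow 1$ is induced by the universal property of the iso-comma via the pair $(1_{w_f},\omega_f)$ (with inverse from $(1_{w_f},\omega_f^{-1})$), the triangle identities follow by uniqueness of 2-cells into $f/_{\cong}B$, and the lift to $\cC\ps B$ uses $\omega_fi_f=1_f$. You merely spell out a bit more explicitly the details the paper leaves as "easy to check".
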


\begin{proof}
$w_fi_f=1_A$ by definition of $i_f$. Then let $\epsilon_f\colon i_fw_f\to 1\colon f/_{\cong}B\to f/_{\cong}B$ be the unique 2-cell such that $(If)\epsilon_f=\omega_f$ and $w_f\epsilon_f=1_{w_f}$. It is easy to check that the inverse $\epsilon_f^{-1}$ can be defined by the equations $(If)\epsilon_f^{-1}=\omega_f^{-1}$ and $w_f\epsilon_f^{-1}=1_{w_f}$, and this completes the proof of the first assertion (triangle identities follow easily). For the second one it suffices to recall that $\omega_fi_f=1_f$ and use the first part of the Lemma.
\end{proof}

\begin{Proposition} \label{prop:isofib}
For morphism $f\colon A \to B$ in \cC\ the following conditions are equivalent and define an internal \emph{isofibration}:
\begin{enumerate}
\item  For all $X$ in \cC, $\cC(X,f)\colon\cC(X,A)\to\cC(X,B)$ is an isofibration  in \Cat;
\item $f$ admits a structure of pseudo-algebra for the 2-monad $I\colon \cC/ B\to\cC/ B$;
\item the morphism $i_f\colon f \to If$ admits a right adjoint in $\cC/ B$.
\end{enumerate}
\end{Proposition}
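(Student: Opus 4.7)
The plan is to mirror the proof-strategy of Proposition \ref{prop:fib}, taking advantage of two facts: the 2-monad $I$ on $\cC/B$ is pseudo-idempotent, and iso-comma squares are determined by a 2-categorical universal property and so are preserved by every representable $\cC(X,-)$.

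I would first establish $(2)\Leftrightarrow(3)$ by direct appeal to the KZ-doctrine dictionary already invoked in the text preceding Proposition \ref{prop:fib}: since $I$ is (in particular) lax-idempotent, pseudo-algebra structures on $f$ are characterized, up to isomorphism, as right adjoint sections of the unit $i_f\colon f\to If$ inside $\cC/B$. This gives both implications essentially for free, with uniqueness of the structure built in.

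For $(1)\Leftrightarrow(3)$ I would argue representably. For every $X\in\cC$, the canonical comparison identifies $\cC(X,If)$ with the iso-comma $\cC(X,f)/_{\cong\,}\cC(X,B)$ in $\Cat$, sending $\cC(X,i_f)$ to the corresponding unit. Applying the $\Cat$-case of $(2)\Leftrightarrow(3)$, which is exactly the classical description of isofibrations in $\Cat$ via a right adjoint to the iso-comma unit, condition (1) translates into: for all $X$, the functor $\cC(X,i_f)$ admits a right adjoint in $\Cat/\cC(X,B)$. Since adjunctions in $\cC/B$ are both preserved and jointly reflected by the family of representables $\cC(X,-)$, this is equivalent to $i_f$ itself admitting a right adjoint in $\cC/B$, which is (3).

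The main obstacle I expect is the careful bookkeeping of the $\Cat$-level input: verifying that the classical isofibration structure on $\cC(X,f)$ really fits into an adjunction of $\cC(X,i_f)$ with the correct triangle identities (the counit being an isomorphism controlled by $\omega$ in the style of Lemma \ref{lemma:w_equiv}), and that these coherences agree with the pseudo-algebra data produced by the KZ-doctrine argument of the first step. Once this is pinned down, the rest of the proof is formal.
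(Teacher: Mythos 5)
Your handling of (2)$\Leftrightarrow$(3) matches what the paper intends (it gives no separate proof, referring instead to the KZ-doctrine dictionary); just note that the ``right adjoint to the unit'' characterization is the \emph{colax}-idempotent half of that dictionary, which applies here only because $I$ is pseudo-idempotent and not merely lax-idempotent. The genuine gap is in your step (1)$\Rightarrow$(3), where you assert that adjunctions in $\cC/B$ are \emph{jointly reflected} by the representables $\cC(X,-)$. This is false in a general 2-category: having a right adjoint to each $\cC(X,i_f)$ does not yield a 1-cell of $\cC/B$ unless the pointwise adjoints can be chosen 2-naturally in $X$ (equivalently, representably). Already in the one-object 2-category arising from a monoidal category, a 1-cell $u$ can have $\cC(X,u)\cong u\otimes(-)$ admitting a right adjoint without $u$ being dualizable, i.e.\ without $u$ having any adjoint in the 2-category. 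This is exactly why condition 1 of Proposition \ref{prop:fib} carries the extra naturality clause (ii), which has no counterpart in the statement you are proving, so the missing naturality cannot simply be assumed.

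The repair is the standard generic-object (Yoneda) argument, which is also what makes the absence of clause (ii) legitimate for isofibrations: apply condition (1) at $X=f\iso B$ to lift the universal invertible 2-cell $\omega_f\colon fw_f\Rightarrow If$ through $\cC(f\iso B,f)$. Since the lifted 2-cell is an isomorphism, you obtain a vertical isomorphism $w_f\Rightarrow c$ with $fc=If$ strictly, i.e.\ an honest 1-cell $(c,1)\colon If\to f$ of $\cC/B$; transporting the adjoint equivalence $(i_f,1)\dashv(w_f,\omega_f)$ of Lemma \ref{lemma:w_equiv} along this invertible 2-cell of $\cC\ps B$ gives the right adjoint to $i_f$ in $\cC/B$, which is (3). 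The converse (3)$\Rightarrow$(1) is then as you say: $\cC(X,-)$ preserves adjunctions and sends the iso-comma to the iso-comma of $\Cat$, and the $\Cat$-level statement finishes the argument. With this replacement your outline is sound and coincides with the paper's intended route; as written, however, the ``joint reflection'' step is the real obstacle, not the bookkeeping of the $\Cat$-level coherences you flag at the end.
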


\begin{Remark} \label{rem:monads}
{\em
\begin{enumerate}
\item The ``pseudo'' version of Proposition \ref{prop:isofib} does not make sense, of course. In fact, every functor is an isofibration up to isomorphism, so that the ``pseudo'' version of condition 1. is empty. This is reflected in the fact that, thanks to Lemma \ref{lemma:w_equiv}, each $f$ always admits a pseudo-algebra structure for $I\colon \cC\ps B \to \cC\ps B$, given by $(w_f,\omega_f)$. Condition 2.\ says that, when $f$ is an isofibration, one can replace $(w_f,\omega_f)$ with another left adjoint (the pseudo-algebra structure) to $(i_f,1)$, in order to obtain an adjoint equivalence which actually lives in $\cC/B$.
\item By the general theory of monads, for each $f$ in \cC, $Lf$, $Rf$ and $If$ are free algebras for the corresponding monads (whose structure is given by the multiplication component at $f$) and it is immediate to see that these are actually strict algebras, which represent split opfibrations (respectively fibrations and isofibrations) internally.
\end{enumerate}
}
\end{Remark}

We recall from \cite{Street74} the following properties, which will be used later on.

\begin{Lemma} \label{lem:pbfib}
Fibrations are pullback stable.
\end{Lemma}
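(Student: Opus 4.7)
The plan is to invoke the representable characterization of internal fibrations in Proposition \ref{prop:fib} (condition 1), which reduces the claim to the classical pullback stability of fibrations in \Cat. Starting from an internal fibration $f\colon A\to B$ in \cC\ and an arbitrary pullback
\[
\xymatrix@!=4ex{
A' \ar[r]^{h'} \ar[d]_{f'} & A \ar[d]^{f} \\
B' \ar[r]_{h} & B
}
\]
in \cC, I want to verify that $f'$ is an internal fibration by checking both sub-conditions of item 1 in Proposition \ref{prop:fib}.

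For each object $X$ in \cC, the 2-functor $\cC(X,-)\colon\cC\to\Cat$ preserves finite limits, so applying it to the square yields a pullback in \Cat\ whose right vertical $\cC(X,f)$ is a fibration by hypothesis. The classical pullback stability of fibrations in \Cat---where a cartesian lift of a morphism $\beta\colon b'\to\cC(X,f')(a')$ is obtained by first cartesian-lifting $\cC(X,h)\cdot\beta$ along $\cC(X,f)$ at $\cC(X,h')(a')$ and then pairing with $\beta$ through the universal property of the pullback---delivers sub-condition (i). For sub-condition (ii), given $g\colon Y\to X$, the naturality square of $\cC(-,f')$ at $g$ is, via the same representable argument, a pullback of the naturality square of $\cC(-,f)$ at $g$, which is a morphism of fibrations by hypothesis on $f$. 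The explicit cartesian-lift formula just given makes it immediate that cartesian arrows in $\cC(X,A')$ map to cartesian arrows in $\cC(Y,A')$, so the naturality square at $g$ is again a morphism of fibrations.

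I do not expect genuine obstacles: the argument reduces to two applications of a classical fact in \Cat, combined with the limit-preservation of hom 2-functors. The only alternative worth mentioning is a direct transfer of the pseudo-$R$-algebra structure on $f$ to $f'$ via condition 2 of Proposition \ref{prop:fib}; this would require comparing $B'/f'$ with the pullback of $B/f$ along $h$ as a 2-cell over $B'$, and is appreciably more technical than the representable route taken above.
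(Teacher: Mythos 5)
Your argument is correct; note, however, that the paper offers no proof to compare against, since Lemma \ref{lem:pbfib} is simply recalled from \cite{Street74}, where the result is obtained in the internal setting (in the spirit of the pseudo-algebra/adjunction descriptions, conditions 2--3 of Proposition \ref{prop:fib}), i.e.\ essentially the more technical alternative you flag and set aside at the end. Your representable route is the standard shortcut and it works: $\cC(X,-)$ preserves pullbacks, so $\cC(X,A')\cong\cC(X,B')\times_{\cC(X,B)}\cC(X,A)$, and the classical construction of cartesian lifts in a pullback of a fibration in \Cat\ gives condition (i) of Proposition \ref{prop:fib}. For condition (ii), the one point worth making explicit is the characterization of cartesian arrows in the pullback: an arrow of $\cC(X,B')\times_{\cC(X,B)}\cC(X,A)$ is cartesian over $\cC(X,B')$ if and only if its $\cC(X,A)$-component is $\cC(X,f)$-cartesian (every cartesian arrow differs from one of your constructed lifts by a vertical isomorphism). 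With this stated, $\cC(g,A')$ acts componentwise and preservation of cartesian arrows reduces exactly to hypothesis (ii) for $f$; as written, ``the explicit lift formula makes it immediate'' quietly assumes every cartesian arrow is one of the constructed lifts. What your approach buys is brevity and independence from the monad $R$; what the internal approach of \cite{Street74} buys is an explicit pseudo-algebra (cleavage) on the pulled-back fibration, which is the form in which fibrations are carried around in $\KFib(B)$ elsewhere in the paper.
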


\begin{Corollary} \label{cor:p0p1}
Given a comma square
\[
\xymatrix{
f/g \ar[d]_{\overline{g}} \ar[r]^-{\overline{f}} & C \ar[d]^{g} \\
A \ar[r]_-{f} \ar@{}[ur]|(.3){}="1" \ar@{}[ur]|(.7){}="2" \ar@{=>}"1";"2" & B
}
\]
then $\overline{f}$ is an opfibration and $\overline{g}$ is a fibration. In particular, the canonical morphisms $d_0$ and $d_1$ of diagram (\ref{diag:commas}) are a fibration and an opfibration respectively.
\end{Corollary}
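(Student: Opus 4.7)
The plan is to realize the comma projection $\overline{g}$ as a strict 2-pullback of a canonical fibration, and then deduce the result from Lemma \ref{lem:pbfib}.

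First I would exhibit $f/g$ as a strict 2-pullback
\[
\xymatrix{
f/g \ar[r] \ar[d]_{\overline{g}} & B/g \ar[d]^{Rg} \\
A \ar[r]_-{f} & B
}
\]
of the ``comma-along-identity'' projection $Rg\colon B/g\to B$ along $f$. This comes from a direct inspection of universal properties: a 1-cell $X\to f/g$ is a triple $(a\colon X\to A,\ c\colon X\to C,\ \alpha\colon fa\Rightarrow gc)$, which is equivalently the datum of a 1-cell $a\colon X\to A$ together with a 1-cell $X\to B/g$ corresponding to $(fa,c,\alpha)$, with matching components to $B$. The analogous check for 2-cells upgrades this identification to a 2-pullback.

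Next I would invoke Remark \ref{rem:monads}(2): the morphism $Rg\colon B/g\to B$ is a free algebra for the 2-monad $R$ on $\cC/B$, and therefore an internal split fibration in \cC. Applying Lemma \ref{lem:pbfib} to the square above, the pulled-back morphism $\overline{g}\colon f/g\to A$ is a fibration as well. The dual argument handles $\overline{f}$: one presents $f/g$ as the pullback of $Lf\colon f/B\to B$ along $g$, uses that $Lf$ is a free algebra for the 2-monad $L$ (hence a split opfibration), and applies the opfibration version of Lemma \ref{lem:pbfib}. The \emph{in particular} clause is then immediate by specializing $f=g=1_B$, so that $f/g=B/B$, $\overline{g}=d_0$ and $\overline{f}=d_1$.

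The only delicate step is the bookkeeping that upgrades the pullback of underlying 1-cells to a 2-pullback — in particular, that comma objects in a finitely complete 2-category are preserved by this presentation and that the 2-cell $\mu_{B/g}$ restricts to the universal 2-cell of $f/g$. Once that verification is in hand, the statement reduces to a one-line application of Lemma \ref{lem:pbfib} together with Remark \ref{rem:monads}(2).
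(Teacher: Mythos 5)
Your argument is correct and is precisely the derivation the paper intends: the statement is recalled from Street's paper without an explicit proof, presented as a corollary of Lemma \ref{lem:pbfib}, and your realization of $f/g$ as the strict 2-pullback of $Rg\colon B/g\to B$ along $f$ (dually, of $Lf\colon f/B\to B$ along $g$), combined with Remark \ref{rem:monads}(2) and pullback stability, is exactly the standard route, with the specialization $f=g=1_B$ giving the claim about $d_0$ and $d_1$. No gaps beyond the routine 2-dimensional verification you already flag.
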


Applying the observation 2.\ of Remark \ref{rem:monads} to the 2-functor $I$ we see that, for each $f\colon A \to B$, its image $If$ is actually an isofibration. Restricting to pseudo-fibrations, we get the following result.

\begin{Lemma} \label{lemma:pseudo.to.fib}
If $f\colon A \to B$ is an internal pseudo-fibration in \cC, then $If$ is an internal fibration.
\end{Lemma}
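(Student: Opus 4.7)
The plan is to split the statement into three observations and combine them.

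First, $If$ is \emph{always} an isofibration, regardless of any hypothesis on $f$: by Remark~\ref{rem:monads}.2, $If$ carries the structure of a free $I$-algebra, so by Proposition~\ref{prop:isofib} it is an isofibration.

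Second, I show that the pseudo-fibration property transfers from $f$ to $If$. By Lemma~\ref{lemma:w_equiv}, the pair $(i_f,1)$, $(w_f,\omega_f)$ constitutes an adjoint equivalence between $f$ and $If$ in $\cC\ps B$. Applying the 2-functor $R$ and invoking 2-naturality of the unit $v$, one obtains an equivalence $Rf \simeq R(If)$ in $\cC\ps B$ under which $v_f$ and $v_{If}$ correspond. Since $f$ is a pseudo-fibration, Proposition~\ref{prop:fib}.3 (in its pseudo version) provides a right adjoint to $v_f$ in $\cC\ps B$, which transports along the equivalence to a right adjoint of $v_{If}$ in $\cC\ps B$; hence $If$ is a pseudo-fibration.

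Third, I argue that any morphism $g\colon C\to B$ in $\cC$ which is simultaneously an isofibration and a pseudo-fibration must be a fibration, and I apply this to $g = If$. The pseudo-fibration hypothesis supplies a right adjoint $(r,\theta)\colon Rg \to g$ to $v_g$ in $\cC\ps B$, with $\theta\colon Rg \Rightarrow gr$ invertible. The universal property of the iso-comma $g/_{\cong}B$ induces a morphism $\hat{r}\colon Rg\to Ig$ in $\cC$ with $w_g\hat{r} = r$, $(Ig)\hat{r} = Rg$, and $\omega_g\hat{r} = \theta^{-1}$. The isofibration hypothesis provides, via Proposition~\ref{prop:isofib}.3, a right adjoint $\gamma\colon Ig\to g$ of $i_g$ in $\cC/B$; setting $r' := \gamma\hat{r}$ yields a 1-cell with $gr' = (Ig)\hat{r} = Rg$ strictly, hence a morphism in $\cC/B$. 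Uniqueness of right adjoints in $\cC\ps B$ gives an iso $\gamma \cong w_g$, whence $r' \cong r$ in $\cC\ps B$; transporting the unit and counit of $v_g\dashv r$ along this iso produces unit and counit of $v_g\dashv r'$ whose whiskerings with $g$ are identities, so the adjunction lives strictly in $\cC/B$. By Proposition~\ref{prop:fib}.3, $g$ is a fibration; combining with the first two steps for $g = If$ completes the proof.

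The main obstacle is the third step: the 2-categorical bookkeeping needed to confirm that, after the transport along $r \cong r'$, the resulting unit and counit really whisker to identities with $g$, so that the adjunction indeed lands in $\cC/B$ and not merely in $\cC\ps B$.
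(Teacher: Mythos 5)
Your proof is correct, but its decisive third step takes a genuinely different route from the paper. The paper's proof shares your first two observations ($If$ is an isofibration because it is a free $I$-algebra, and $If$ is a pseudo-fibration because it is equivalent to $f$ in $\cC\ps B$), but then disposes of ``isofibration $+$ pseudo-fibration $\Rightarrow$ fibration'' by representability: via point 1.\ of Propositions \ref{prop:fib} and \ref{prop:isofib} everything is tested on the hom-categories $\cC(X,-)$, where the statement is the classical fact in \Cat\ (the morphism-of-fibrations condition 1(ii) transfers because cartesian 2-cells for a pseudo-fibration and for the underlying fibration are the same class). You instead prove this implication internally, by strictifying the pseudo-algebra structure: given $v_g\dashv(r,\theta)$ in $\cC\ps B$ and the isofibration structure $i_g\dashv\gamma$ in $\cC/B$, you produce $r'=\gamma\hat r$ strictly over $B$ and transport the adjunction along the canonical iso $r'\cong r$. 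The bookkeeping you flag as the main obstacle does go through, precisely because that iso comes from uniqueness of right adjoints \emph{in} $\cC\ps B$: the comparison $\zeta\colon\gamma\Rightarrow w_g$ satisfies $g\zeta=\omega_g$, hence $\chi:=\zeta\hat r$ satisfies $g\chi=\theta^{-1}$, and since the unit and counit of $v_g\dashv(r,\theta)$ in $\cC\ps B$ satisfy $g\eta=(\theta v_g)^{-1}$ and $(Rg)\epsilon=\theta$, the transported cells $\eta'=(\chi^{-1}v_g)\cdot\eta$ and $\epsilon'=\epsilon\cdot(v_g\chi)$ whisker to identities, so the adjunction lands in $\cC/B$ and Proposition \ref{prop:fib}.3 applies. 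What each approach buys: the paper's argument is short and leans on the known \Cat\ fact, while yours is self-contained in the finitely complete 2-category and produces an explicit strict cleavage for $If$ from the pseudo one. One cosmetic point: in your second step the unit $v$ is only pseudo 2-natural on $\cC\ps B$ (it is strictly 2-natural only on $\cC/B$), but pseudo-naturality is all you need to transport the right adjoint along the equivalence of Lemma \ref{lemma:w_equiv}.
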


\begin{proof}
First recall from Lemma \ref{lemma:w_equiv} that $f$ and $If$ are equivalent as objects in $\cC\ps B$, hence $If$ is a pseudo-fibration, since $f$ is. Moreover, $If$ is an isofibration, by Remark \ref{rem:monads} 2. Then, by the representability of the notions involved, i.e.\ point 1.\ in Propositions \ref{prop:fib} and \ref{prop:isofib}, the thesis follows from the well-known fact that, in \Cat, any pseudo-fibration which is also an isofibration is actually a fibration.
\end{proof}

In fact, $I$ sends also morphisms (and 2-cells) of pseudo-fibrations to morphisms (and 2-cells) of fibrations, so that the following result holds.

\begin{Proposition} \label{prop:I.pres.fib}
The restriction of $I\colon\cC\ps B\to \cC\ps B$ to the full sub-2-category of pseudo-fibrations can be factorized as
\[
\xymatrix{
\PsFib(B) \ar@{^{(}->}[rr] \ar[d] & & \cC\ps B \ar[d]^{I} \\
\Fib(B) \ar@{^{(}->}[r] & \cC/B \ar[r] & \cC\ps B.
}
\]
\end{Proposition}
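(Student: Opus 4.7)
The plan is to verify, level by level, that $I$ restricted to $\PsFib(B)$ lands in $\Fib(B)$, viewed as a sub-2-category of $\cC/B$. On objects, this is exactly the content of Lemma~\ref{lemma:pseudo.to.fib}: if $f$ is a pseudo-fibration in \cC, then $If$ is a (strict) internal fibration. On 1-cells, I would observe that the action of $I$ on morphisms, defined in complete analogy with the explicit formulas for $L$ given earlier, produces $I(t,\theta) = (It,1)$ with identity 2-cell component, so the image automatically lies in $\cC/B$. Dually, the action of $I$ on a 2-cell $\xi$ mimics that of $L\xi$ and in particular satisfies $(If')(I\xi)=1$, so $I\xi$ is a 2-cell of $\cC/B$.

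The remaining and main point is to show that the strict 1-cell $(It,1)\colon If \to If'$ actually preserves the chosen pseudo-$R$-algebra structures on $If$ and $If'$, and therefore is a 1-cell of $\Fib(B)$. My preferred route is via the representable characterization of Proposition~\ref{prop:fib}(1). For each object $X$ of \cC, the representable $\cC(X,-)$ preserves the iso-comma as a finite 2-limit, so we have canonical identifications $\cC(X, If)\cong \cC(X,f)\iso \cC(X,B)$, and $\cC(X, I(t,\theta))$ is identified with the induced functor between iso-commas in \Cat. Since $\cC(X,f)$ and $\cC(X,f')$ are pseudo-fibrations in \Cat\ and $\cC(X,(t,\theta))$ is a pseudo-morphism between them, the problem reduces to the analogous statement in \Cat: a pseudo-morphism between pseudo-fibrations induces a strict morphism of fibrations between the iso-commas.

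This reduced \Cat-level claim can be established by a direct computation. One equips $If$ with the cleavage whose cartesian lift of a morphism $\beta\colon b'\to b$ over an object $(a,b,\gamma\colon f(a)\xrightarrow{\sim} b)$ is built from a pseudo-cartesian lift in $A$ of the composite $\gamma^{-1}\circ\beta$, and analogously for $If'$. The invertible component $\gamma$ present in the data of an object of an iso-comma is exactly what absorbs the invertible 2-cell $\theta$ of the pseudo-morphism, so that $It$ transports the chosen cartesian lifts on the nose rather than merely up to isomorphism. This absorption is the main obstacle and the only non-bookkeeping step; everything else either appeals to Lemma~\ref{lemma:pseudo.to.fib}, to the explicit formulas defining $I$ on arrows, or to preservation of iso-commas by representables.
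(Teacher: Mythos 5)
Your object-level step (Lemma \ref{lemma:pseudo.to.fib}) and the observation that $I$ sends $(t,\theta)$ to a 1-cell $(It,1)$ of the strict slice agree with what the paper does. The gap is exactly at the step you yourself flag as the main one. Absorbing $\theta$ into the iso-comma component $\gamma$ only guarantees that $I(t,\theta)$ lives in $\cC/B$; it does not make $It$ carry chosen cartesian lifts to chosen cartesian lifts. Concretely, with the cleavage you describe, the chosen lift of $\beta\colon b'\to b$ at $(a,b,\gamma)$ has first component the chosen pseudo-cartesian lift of $\gamma^{-1}\beta$ at $a$, so its image under $I(t,\theta)$ has first component $t$ applied to that chosen lift, whereas the chosen lift at $(ta,b,\gamma\theta_a)$ has first component the chosen pseudo-cartesian lift of $\theta_a^{-1}\gamma^{-1}\beta$ at $ta$. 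These coincide on the nose precisely when $t$ is strictly compatible with the pseudo-algebra structures of $f$ and $f'$, i.e.\ when $(t,\theta)$ is a morphism of pseudo-fibrations; this is the hypothesis the paper tacitly uses (``$I$ sends also morphisms (and 2-cells) of pseudo-fibrations to morphisms (and 2-cells) of fibrations''), and your argument never invokes it. Note that even a pseudo-morphism of pseudo-algebras would not suffice: its comparison isomorphism reappears between the two lifts above, so $I(t,\theta)$ is then only a pseudo-morphism, while the 1-cells of $\Fib(B)$ are by definition strict pseudo-algebra morphisms.

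For genuinely arbitrary 1-cells of the full sub-2-category, as your proposal treats them, the claim is false, so no computation can close this gap. Take $\cC=\Cat$, $B=\mathbf{2}$, $f=1_B$, $f'\colon A'\to\mathbf{2}$ a fibration possessing a non-cartesian arrow $u\to x$ lying over the nonidentity arrow of $\mathbf{2}$, and $t\colon\mathbf{2}\to A'$ the functor over $B$ picking out $u\to x$. Since $\mathbf{2}$ has no nonidentity isomorphisms, $f\iso B\cong\mathbf{2}$ and $f'\iso B\cong A'$ over $B$, and under these identifications $I(t,1)$ is $t$, which does not even preserve cartesian arrows (every arrow is cartesian for $1_B$) and hence is not a 1-cell of $\Fib(B)$ for any choice of cleavages. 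A secondary, fixable point: to obtain the strict equation in \cC\ rather than in \Cat, you should first construct the algebra structure on $If$ as an actual 1-cell $B/If\to f\iso B$ in \cC\ out of $r_f$ and the universal property of the iso-comma; only then does the reduction along representables make sense.
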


Let us observe that, when $f$ is a pseudo-fibration, the adjoint equivalence $(w_f,\omega_f)\dashv (i_f,1)$ actually lives in $\PsFib(B)$.

\section{Coinverters and coidentifiers in $\Fib(B)$}

From now on, let \cC\ be a finitely complete 2-category with coidentifiers and (strict) coinverters of reflexive 2-cells, whose definition we recall for the sake of completeness (the reader may refer to \cite{Kelly} for example).

\begin{Definition}
The coidentifier (coinverter) of a 2-cell $\alpha$ is a 1-cell $q$ such that: 
\begin{enumerate}
 \item $q\alpha$ is an identity (isomorphism);
 \item for any other 1-cell $f$ such that $f\alpha$ is an identity (isomorphism), there exists a unique 1-cell $t$ with $tq=f$;
 \item for any 2-cell $\beta\colon g \to h$ such that $g\alpha$ and $h\alpha$ are identities (isomorphisms), there exists a unique 2-cell $\gamma$ with $\gamma q=\beta$;
\end{enumerate}
\end{Definition}

In this paper we will consider in particular coidentifiers (coinverters) of identees (invertees). Given a 1-cell $f$, we denote with $(K,\kappa)$ its identee, where $\kappa$ is the 2-universal 2-cell making $f\kappa$ an identity. We denote with $(W,\omega)$ the invertee of $f$, where $\omega$ is the 2-universal 2-cell making $f\omega$ an isomorphism.

Later on, we will take advantage of the following results concerning isofibrations.
Recall that a morphism is called $f$-vertical if its image under $f$ is an identity.

%
\begin{Lemma} \label{lemma:psvert=vert}
Let $f$ be an isofibration and $\alpha$ a 2-cell such that $f\alpha$ is an isomorphism. Then $\alpha$ factorizes as $\alpha=\sigma\cdot\tau$, where $\tau$ is $f$-vertical and $\sigma$ is an isomorphism.
\end{Lemma}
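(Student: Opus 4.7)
The plan is to argue representably, reducing the statement to the well-known fact that isofibrations in $\Cat$ admit such a factorization for iso-invertible 2-cells. Let $\alpha\colon g\Rightarrow h\colon X\to A$ be a 2-cell in $\cC$ with $f\alpha$ invertible. By Proposition \ref{prop:isofib}(1), the functor $\cC(X,f)\colon\cC(X,A)\to\cC(X,B)$ is an isofibration in $\Cat$, so I can perform the lift inside the hom-category $\cC(X,A)$.

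Concretely, I would apply the isofibration property of $\cC(X,f)$ to the isomorphism $f\alpha\colon fg\to fh$ in $\cC(X,B)$, with target object $h\in\cC(X,A)$ (which satisfies $\cC(X,f)(h)=fh$). This yields an isomorphism $\sigma\colon k\to h$ in $\cC(X,A)$ such that $f\sigma=f\alpha$. Then I set
\[
\tau \;=\; \sigma^{-1}\cdot\alpha\colon g\longrightarrow k,
\]
so that by construction $\sigma\cdot\tau=\alpha$, giving the desired factorization.

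It remains to check that $\tau$ is $f$-vertical, which is an immediate computation:
\[
f\tau \;=\; f\sigma^{-1}\cdot f\alpha \;=\; (f\sigma)^{-1}\cdot f\alpha \;=\; (f\alpha)^{-1}\cdot f\alpha \;=\; 1_{fg}.
\]
There is no real obstacle: the only mild subtlety is choosing the correct endpoint of the lift (namely $h$, not $g$), so that the resulting $\tau$ ends up $f$-vertical rather than $\sigma$. Everything else is a direct translation of the $\Cat$ situation via the representable characterization of internal isofibrations.
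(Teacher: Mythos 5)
Your proof is correct and follows essentially the same route as the paper: lift the isomorphism $f\alpha$ at the codomain of $\alpha$ to an isomorphism $\sigma$, then take $\tau$ to be the induced $f$-vertical comparison (you define it explicitly as $\sigma^{-1}\cdot\alpha$, the paper invokes the unique vertical factorization through the cartesian lift, which amounts to the same thing here). Making the representable reduction via Proposition \ref{prop:isofib}(1) explicit is fine and matches the paper's intended reading.
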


\begin{proof}
Since $f$ is an isofibration, the isomorphism $f\alpha$ admits a cartesian lifting $\sigma$, which is an isomorphism, at the codomain of $\alpha$. $\tau$ is then the unique $f$-vertical factorization of $\alpha$ through $\sigma$.
\end{proof}

\begin{Corollary} \label{cor:isofib.cons=gpd(isofib)}
Let $f$ be an isofibration, then:
\begin{enumerate}
 \item $f$ is conservative if and only if its fibres are groupoids;
 \item the coinverter of the identee of $f$ coincides with the coinverter of its invertee.
\end{enumerate}
\end{Corollary}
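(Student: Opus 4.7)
The strategy for both parts is to exploit Lemma \ref{lemma:psvert=vert}, which for an isofibration $f$ lets us break any 2-cell $\alpha$ with $f\alpha$ invertible into an invertible piece $\sigma$ and an $f$-vertical piece $\tau$. This reduces statements about general ``invertible-under-$f$'' 2-cells to statements about vertical ones, where the fibrewise structure is directly accessible.

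For part~1, apply Lemma \ref{lemma:psvert=vert} to obtain $\alpha=\sigma\cdot\tau$ with $\sigma$ an isomorphism and $\tau$ $f$-vertical; then $\alpha$ is invertible if and only if $\tau$ is. Thus $f$ is conservative precisely when every $f$-vertical 2-cell is invertible, which is the same as saying that each fibre of $f$ is a groupoid. Both directions are immediate once this factorization is in place.

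For part~2, the key claim is that a 1-cell $q\colon A\to Q$ sends $\kappa$ to an isomorphism if and only if it sends $\omega$ to an isomorphism. One direction is essentially formal: since $f\kappa$ is an identity (hence invertible), the universal property of the invertee yields a unique $h\colon K\to W$ with $\omega h=\kappa$, so whenever $q\omega$ is invertible the whiskering $q\kappa=q\omega\cdot h$ is invertible too. For the converse, apply Lemma \ref{lemma:psvert=vert} to $\omega$ itself, writing $\omega=\sigma\cdot\tau$ with $\sigma$ an isomorphism and $\tau$ $f$-vertical; the universal property of the identee then produces a unique $k\colon W\to K$ with $\kappa k=\tau$, so $q\kappa$ invertible forces $q\tau=q\kappa\cdot k$ invertible and hence $q\omega=q\sigma\cdot q\tau$ invertible.

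Once this equivalence is established, I would conclude by observing that the coinverter is determined by its class of inverting 1-cells: condition~1 and the existence/uniqueness in condition~2 are exactly the statements that $q$ is universal among 1-cells inverting $\kappa$ (respectively $\omega$), and condition~3 is parameterised by the same class of inverting 1-cells, so it transfers automatically. Hence the coinverter of $\kappa$ satisfies the universal property defining the coinverter of $\omega$, and vice versa, so they agree. The main delicate point I would check carefully is the direction ``$q\kappa$ invertible $\Rightarrow$ $q\omega$ invertible'', since this is where the isofibration hypothesis on $f$ (via Lemma \ref{lemma:psvert=vert}) is indispensable; the rest of the argument is essentially a bookkeeping of universal properties.
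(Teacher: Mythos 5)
Your proposal is correct and follows essentially the same route as the paper: both arguments hinge on Lemma \ref{lemma:psvert=vert} to write $\omega=\sigma\cdot\tau$ with $\sigma$ invertible and $\tau$ $f$-vertical, factor $\tau$ through the identee, and use that the identee is a whiskering of the invertee. You merely make explicit the comparison of the two classes of inverting 1-cells (and the transfer of the 2-dimensional universal property), which the paper leaves implicit in the phrase ``it suffices to observe that the coinverter of the identee coinverts also $\omega$''.
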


\begin{proof}
Let $(W,\omega)$ and $(K,\omega c)$ be the invertee and the identee, respectively, of $f$:
\[
\xymatrix@C=6ex{
K \ar[r]^-c & W \ar@/_2ex/[r]_{w_1}^{}="1" \ar@/^2ex/[r]^{w_0}_{}="2" \ar@{=>}"2";"1"^{\omega} & A \ar[r]^-f & B\,.
}
\]
Since $f\omega$ is an isomorphism by definition, as in Lemma \ref{lemma:psvert=vert}, we can factorize $\omega$ as a composite $\omega=\sigma\cdot\tau$, where $\sigma$ is a cartesian lifting of $f\omega$, and $\tau$ the unique $f$-vertical comparison 2-cell. $\tau$ being vertical, there is a unique $c'\colon W\to I$ such that $\omega cc'=\tau$. So we have factorized $\omega$ as in the following diagram:
\[
\xymatrix@!=7ex{
W \ar[r]^-{c'} \ar@/_/[drr]_{w_1}^{}="3" & K \ar[r]^-c_{}="4"  \ar@{=>}"4";"3"^{\sigma} & W \ar@/_2ex/[d]_{w_1}^{}="1" \ar@/^2ex/[d]^{w_0}_{}="2" \ar@{=>}"2";"1"^{\omega} \\
& & A\,.
}
\]
It is now easy to see that $f$ is conservative, i.e.\ its invertee is an isomorphism, if and only if its fibres are groupoids, i.e.\ its identee is an isomorphism.

As for the second statement, it suffices to observe that the coinverter of the identee $\omega c$ coinverts also $\omega=\sigma\cdot\omega cc'$.
\end{proof}

Obviously the last result does not hold in general if $f$ is not an isofibration, as it is whitnessed by the non-constant functor from the arrow-category $\mathbf{2}$ to the groupoid $\mathbf{I}$ with two objects and two non-trivial arrows.

It is easy to check that identees and coidentifiers in $\cC/B$ are computed in \cC, and the same property holds for coinverters of identees.

\begin{Lemma} \label{lemma:inv/B}
The forgetful 2-functor $U\colon \cC/B \to \cC$ creates coinverters of identees.
\end{Lemma}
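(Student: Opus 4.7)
The plan is to exhibit, for any 1-cell $t\colon f \to f'$ in $\cC/B$, the coinverter of its identee as a canonical lift of the corresponding construction in $\cC$. First I would form the identee $(K,\kappa)$ of $t$ in $\cC$ and record the crucial observation that $\kappa$ is automatically $f$-vertical: since $f = f't$ and $t\kappa = 1$ by the very definition of the identee, one has $f\kappa = f't\kappa = 1$. Setting $k := fd_0 = fd_1$ thus makes $k$ an object of $\cC/B$, both $d_0$ and $d_1$ morphisms $k \to f$ in $\cC/B$, and $\kappa$ a $\cC/B$-2-cell realising the identee of $t$ there (since 2-cells in $\cC/B$ are precisely the $f$-vertical 2-cells in $\cC$, the universal property transfers for free).

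Next I would lift the coinverter. Let $q\colon A \to Q$ coinvert $\kappa$ in $\cC$. Because $f\kappa$ is an identity, $f$ coinverts $\kappa$, so the $\cC$-universal property of $q$ yields a unique $h\colon Q \to B$ with $hq = f$, promoting $q$ to a 1-cell $f \to h$ in $\cC/B$. I would then verify universality in $\cC/B$ clause by clause. For 1-cells: given $p\colon f \to p'$ in $\cC/B$ with $p\kappa$ invertible, universality in $\cC$ produces a unique $s\colon Q \to P'$ with $sq = p$, and the relation $p's = h$ (needed to place $s$ over $B$) follows from $p'sq = p'p = f = hq$ together with the fact that $q$ is epimorphic on 1-cells, itself an immediate consequence of the uniqueness clause applied to the common composite. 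For 2-cells: any $\cC/B$-2-cell $\beta$ between two such 1-cells lifts to a unique $\gamma$ in $\cC$ with $\gamma q = \beta$, and verticality $h\gamma = 1_h$ is forced by $(h\gamma)q = h\beta = 1 = 1_h\, q$ via another application of uniqueness, so $\gamma$ is indeed a $\cC/B$-2-cell.

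The real content of the lemma, and the only mildly delicate point, lies in securing the base map $h\colon Q \to B$: this is available exactly because the coinverted 2-cell is an identee of a $\cC/B$-morphism, which forces $f\kappa = 1$ automatically. Every other step is a routine transfer of a $\cC$-universal property to $\cC/B$, and uniqueness of the lift at each stage is inherited directly from the $\cC$-level.
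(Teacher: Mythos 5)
Your proof is correct and follows exactly the expected route: the paper itself offers no proof of this lemma (it is presented as an easy check, following the remark that identees in $\cC/B$ are computed in $\cC$), and your argument supplies precisely that verification, with the key point rightly identified as $f\kappa = f't\kappa = 1$ providing the base map $h$ through the universal property of $q$. The remaining transfers of the one- and two-dimensional universal properties, including the right-cancellability of $q$, are handled correctly.
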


We are going to explore the behaviour of the monad $R$ with respect to these limits and colimits. Analogous results can be proved for the monad $L$.

\begin{Remark} \label{rem:R}
It is worth observing that the functor $R$ can be described by means of the following construction:
\[
\xymatrix@!=5ex{
B/f \ar[r]_{d_1^*f} \ar@/^3ex/[rr]^{Rf} \ar[d]_{d_1} & B/B \ar[r]_-{d_0} \ar[d]_{d_1} & B \ar@{=}[d] \\
A \ar[r]_-{f} & B \ar@{=}[r] \ar@{}[ur]|(.3){}="1" \ar@{}[ur]|(.7){}="2" \ar@{=>}"2";"1"^{\mu_B} & B.
}
\]
That is, $R=(d_0)_{!}d_1^*$, i.e.\ the composite of the change-of-base 2-functor along $d_1$ with the composition 2-functor with $d_0$, which is left 2-adjoint to $d_0^*$.
\end{Remark}

\begin{Lemma} \label{lemma:R.pres.id}
The monad $R\colon\cC/B\to\cC/B$ preserves identees.
\end{Lemma}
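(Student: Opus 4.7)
The plan is to apply $R$ to the identee $(K,k,\kappa)$ of a 1-cell $t\colon(A,a)\to(A',a')$ in $\cC/B$ and verify that $R\kappa\colon Rk\Rightarrow Rk$ is the identee of $Rt$ in $\cC/B$. The main technical tool will be the comma-object description of maps into $B/a$.

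As a preliminary step, I would observe that identees in $\cC/B$ are created by the forgetful 2-functor to $\cC$: given the identee $(K,k,\kappa)$ of $t$ in $\cC$, endowing $K$ with structure map $ak$ turns $k$ into a 1-cell of $\cC/B$, and the equation $a=a't$ together with $t\kappa=1$ forces $a\kappa=1$, so $\kappa$ lifts to $\cC/B$ with the same universal property.

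The substantive step is to use the comma universal property of $B/a$: for any $X$, $\cC(X,B/a)$ identifies with triples $(b,c,\gamma)$ where $b\colon X\to B$, $c\colon X\to A$ and $\gamma\colon b\Rightarrow ac$, and 2-cells between two such triples sharing endpoints are pairs $(\beta_0,\beta_1)$ subject to $(a\beta_1)\cdot\gamma=\gamma\cdot\beta_0$. In this parametrisation, $Rt$, $Rk$ and $R\kappa$ act as the identity on the first component and as $t$, $k$, $\kappa$ respectively on the second. Given a 2-cell $\beta\colon u\Rightarrow u$ in $\cC/B$ with $u=(x,c,\gamma)$ over $(X,x)$ and $Rt\cdot\beta=1$, the $\cC/B$-strictness gives $\beta_0=1_x$, $Rt\cdot\beta=1$ gives $t\beta_1=1$, and the compatibility $(a\beta_1)\cdot\gamma=\gamma$ is automatic since $a\beta_1=a'(t\beta_1)=1$. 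The universal property of $\kappa$ in $\cC$ then yields a unique $\tilde c\colon X\to K$ with $c=k\tilde c$ and $\beta_1=\kappa\tilde c$, and the triple $(x,\tilde c,\gamma)$ assembles into a unique $\tilde u\colon(X,x)\to(B/(ak),R(ak))$ in $\cC/B$ satisfying $u=Rk\cdot\tilde u$ and $\beta=R\kappa\cdot\tilde u$.

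The whole argument is essentially bookkeeping with the comma universal property and the $\cC/B$-strictness conditions; the only mild point of caution is to verify that the candidate $\tilde u$ indeed lives in $\cC/B$, which falls out automatically from $R(ak)\cdot\tilde u=x=Ra\cdot u$. I do not anticipate any real obstacle.
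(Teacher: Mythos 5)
Your overall strategy---a direct representable verification using the parametrisation of maps into the comma object $B/a$ by triples $(b,c,\gamma)$, together with the fact that identees in $\cC/B$ are created by the forgetful 2-functor---is viable, and your description of how $R$ acts in that parametrisation (identity on the first component, post-composition with $t$, $k$, $\kappa$ on the second, $\gamma$ unchanged) is correct. But there is a genuine flaw in the set-up: the identee of $t$ is \emph{not} an endo-2-cell. It is a 2-cell $\kappa\colon k_0\Rightarrow k_1$ between two in general distinct 1-cells $k_0,k_1\colon K\to A$ (this is how the paper draws it in diagram (\ref{diag:identee}); in \Cat\ it is the category of $t$-vertical arrows, with the domain and codomain functors as the two legs). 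Accordingly, its universal property must be tested against \emph{all} 2-cells $\beta\colon u_0\Rightarrow u_1$ such that $(Rt)\beta$ is an identity---which forces $(Rt)u_0=(Rt)u_1$ but not $u_0=u_1$---whereas you only test it against endo-2-cells $\beta\colon u\Rightarrow u$. The endo-restricted property is strictly weaker and does not characterise the identee: in \Cat, the category of $t$-vertical \emph{endomorphisms} of objects of $A$ also satisfies it, yet it is not the identee.

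The repair needs no new idea, only the two-leg bookkeeping: write $u_i=(x,c_i,\gamma_i)$; the condition $(Rt)u_0=(Rt)u_1$ gives $tc_0=tc_1$ and $\gamma_0=\gamma_1$, the condition $(Rt)\beta=1$ gives $t\beta_1=1$ (with $\beta_0=1_x$ forced by living in $\cC/B$), and the identee in \cC\ then yields a unique $\tilde c$ with $k_0\tilde c=c_0$, $k_1\tilde c=c_1$, $\kappa\tilde c=\beta_1$, so that $\tilde u=(x,\tilde c,\gamma_0)\colon(X,x)\to(B/(ak_0),R(ak_0))$ is the required unique factorisation; the 2-dimensional part of the universal property should at least be remarked on and is handled by the same representable description. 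For comparison, the paper avoids all of this: by Remark \ref{rem:R}, $R=(d_0)_{!}d_1^*$, where $d_1^*$ preserves identees because it is a right 2-adjoint and $(d_0)_{!}$ preserves them because identees in slice 2-categories are computed in \cC. Your elementwise argument, once corrected, buys an explicit description of the identee of $Rt$; the paper's argument is shorter and gets the 2-dimensional universal property for free.
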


\begin{proof}
By rk \ref{rem:R}, the thesis follows from the fact that $d_1^*$ preserves limits, being a right adjoint, and $(d_0)_{!}$ preserves identees.
\end{proof}

\begin{Lemma}
The identee of a morphism $p\colon(A,f)\to(C,g)$ in $\KFib(B)$ can be computed as in \cC.
\end{Lemma}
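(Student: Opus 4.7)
The plan is to show that the identee $(K,d_0,d_1,\kappa)$ of $p$ computed in $\cC$ can be canonically lifted to an identee in $\KFib(B)$. Since $gp=f$ and $p\kappa$ is an identity, we have $fd_0=gpd_0=gpd_1=fd_1$, so setting $k:=fd_0=fd_1$ promotes $(K,d_0,d_1,\kappa)$ to data in $\cC/B$. First I would observe that, by Lemma \ref{lemma:inv/B} and its evident analogue for identees (or directly by creation of the relevant conical limits), the identee of $p$ as a morphism of $\cC/B$ is still $(K,d_0,d_1,\kappa)$.

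Next I would equip $(K,k)$ with a pseudo-$R$-algebra structure $\gamma\colon R(K,k)\to(K,k)$. By the preceding Lemma \ref{lemma:R.pres.id}, $(RK,Rd_0,Rd_1,R\kappa)$ is the identee of $Rp$ in $\cC/B$. Writing $\alpha$ and $\beta$ for the pseudo-algebra structures of $(A,f)$ and $(C,g)$, the fact that $p$ is a strict morphism in $\KFib(B)$ gives $p\alpha=\beta Rp$, whence
\[
p\cdot(\alpha R\kappa)=\beta\cdot Rp\cdot R\kappa=\beta\cdot R(p\kappa)=1.
\]
By the universal property of the identee $K$ there is then a unique $\gamma\colon RK\to K$ satisfying $d_i\gamma=\alpha\, Rd_i$ (for $i=0,1$) and $\kappa\gamma=\alpha R\kappa$. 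A direct computation shows $k\gamma=Rk$, so $\gamma$ lives in $\cC/B$, and the pseudo-algebra coherences for $\gamma$ (invertibility of $\gamma\cdot v_{k}$ and compatibility with multiplication) follow from the corresponding ones for $\alpha$ by applying the 1- and 2-dimensional universal property of $K$ to both sides of each coherence equation.

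Once this is set up, the defining equations of $\gamma$ say precisely that $d_0,d_1$ are \emph{strict} morphisms of pseudo-algebras and that $\kappa$ satisfies the algebra 2-cell condition $\kappa\gamma=\alpha R\kappa$; hence $(K,\gamma)$ together with $d_0,d_1,\kappa$ defines a candidate identee in $\KFib(B)$. For the universal property, given a 2-cell $\sigma\colon h_0\to h_1\colon(X,\alpha_X)\to(A,\alpha)$ in $\KFib(B)$ with $p\sigma=1$, the universal property of $K$ in $\cC$ provides a unique $h\colon X\to K$ with $d_ih=h_i$ and $\kappa h=\sigma$; testing the identity $h\alpha_X=\gamma Rh$ against $d_0,d_1,\kappa$ reduces it to the facts that $h_0,h_1$ are strict algebra morphisms and that $\sigma$ is an algebra 2-cell, all of which hold by hypothesis. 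Uniqueness at the 2-level is analogous.

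The one step where care is needed is the verification that $\gamma$ actually satisfies the pseudo-algebra axioms: this is the ``main obstacle,'' but it is not a genuine difficulty since every coherence isomorphism that needs to be produced lives between 1-cells into $K$, so the 2-dimensional universal property of the identee, combined with the analogous coherences for $\alpha$ and the compatibility $p\alpha=\beta Rp$, supplies and uniquely characterises it.
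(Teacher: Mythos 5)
Your argument is correct, and its core coincides with the paper's: your structure map $\gamma$ is exactly the paper's $r_h$, determined by the same equations ($d_i\gamma=\alpha\,Rd_i$, $\kappa\gamma=\alpha R\kappa$, i.e.\ $k_ir_h=r_fRk_i$, $\kappa r_h=r_fR\kappa$ in the paper's notation), obtained from the one-dimensional universal property of the identee $K$ together with strictness of $p$ ($p\,r_f=r_g\,Rp$). Where you diverge is in which characterization of Proposition \ref{prop:fib} you complete: the paper uses condition 3 and builds the adjunction $v_h\dashv r_h$, and producing its counit requires the two-dimensional universal property of $B/h=R(K,k)$ as the identee of $Rp$ --- this is precisely where Lemma \ref{lemma:R.pres.id} enters. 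You use condition 2 and construct the pseudo-algebra coherence isomorphisms instead; since these are 2-cells between 1-cells landing in $K$ (namely $\gamma\, v_k\cong 1_K$ and $\gamma\,R\gamma\cong\gamma\, n_k$), only the two-dimensional universal property of $K$ itself is needed, so your appeal to Lemma \ref{lemma:R.pres.id} is in fact never used --- a small economy compared with the paper's route (one should just record, as you implicitly do, that these coherence cells are over $B$, which follows because $f$ kills the corresponding coherence cells of $\alpha$). You are also more explicit than the paper in verifying that the lifted data satisfies the identee universal property in $\KFib(B)$: your reduction of $h\alpha_X=\gamma Rh$, and of the corresponding 2-dimensional condition, to the hypotheses that $h_0,h_1$ are strict algebra morphisms and $\sigma$ an algebra 2-cell is the right argument and fills in what the paper dismisses as straightforward.
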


\begin{proof}
Let
\begin{equation} \label{diag:identee}
\xymatrix@!=6ex{
K \ar@/_2ex/[dr]_{h} \ar@/^2ex/[r]^{k_0}_{}="1" \ar@/_2ex/[r]_{k_1}^{}="2" \ar@{=>}"1";"2"^{\kappa} & A \ar[d]_{f} \ar[r]^{p} & C \ar[dl]^{g} \\
& B
}
\end{equation}
be an identee diagram in $\cC/B$ (which means that $\kappa$ is also the identee of $p$ in \cC). Since $p$ is a morphism in $\KFib(B)$ and $R$ preserves identees by Lemma \ref{lemma:R.pres.id}, it is straightforward to prove that the adjunctions $v_f\dashv r_f$ and $v_g\dashv r_g$ in the diagram
\[
\xymatrix@!=6ex{
B/h \ar@{-->}@/^/[d]^{r_h} \ar@/^2ex/[r]^{Rk_0}_{}="1" \ar@/_2ex/[r]_{Rk_1}^{}="2" \ar@{=>}"1";"2"^{R\kappa} & B/f \ar@/^/[d]^{r_f} \ar[r]_-{Rp} & B/g \ar@/^/[d]^{r_g} \\
K \ar@{-->}@/^/[u]^{v_h}_{\dashv} \ar@/^2ex/[r]^{k_0}_{}="1" \ar@/_2ex/[r]_{k_1}^{}="2" \ar@{=>}"1";"2"^{\kappa} & A \ar[r]^-{p} \ar@/^/[u]_{\dashv}^{v_f} & C \ar@/^/[u]_{\dashv}^{v_g}
}
\]
induce an adjunction $v_h\dashv r_h$ by the universal property of the identees.
\end{proof}

Coinverters and coidentifiers may not be preserved by the monad $R$, however this happens to be true in some cases of interest which we will explore later on. So, for a given object $B$ in \cC, we shall consider the property
\begin{itemize}
\item[($\dagger$)] The monad $R\colon \cC/B\to \cC/B$ preserves coinverters and coidentifiers of identees.
\end{itemize}

\begin{Proposition} \label{prop:coinv}
Let $B$ be an object in \cC\ satisfying $(\dagger)$, $p\colon(A,f)\to(C,g)$ a morphism in $\KFib(B)$ and $\kappa$ its identee in \cC. Then the coinverter $q\colon A\to Q$ of $\kappa$ in \cC\ induces a factorization
\[
\xymatrix@!=6ex{
A \ar@/^3ex/[rr]^{p} \ar[dr]_{f} \ar[r]_-{q} & Q \ar[d]_(.4){gs} \ar[r]_-{s} & C \ar[dl]^{g} \\
& B
}
\]
of $p$ in $\KFib(B)$, and $q\colon(A,f)\to(Q,gs)$ is the coinverter of $\kappa$ in $\KFib(B)$.
\end{Proposition}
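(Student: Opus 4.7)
The plan is to first produce the comparison 1-cell $s$, then endow $h := gs$ with a pseudo-algebra structure for $R$, and finally check the universal property of $q$ in $\KFib(B)$. Since $\kappa$ is the identee of $p$, the 2-cell $p\kappa = 1$ is in particular invertible, so the universal property of the coinverter $q$ in \cC\ yields a unique $s\colon Q \to C$ with $sq = p$. Applying the same argument to $f\kappa = gp\kappa = 1$ produces a unique $h\colon Q \to B$ with $hq = f$; by uniqueness $h = gs$. Thus the claimed factorization lives over $B$ (consistently with Lemma \ref{lemma:inv/B}).

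To show that $h$ is a fibration, I would construct a pseudo-algebra structure $r_h\colon Rh \to h$. The hypothesis $(\dagger)$, together with Lemma \ref{lemma:R.pres.id} identifying $R\kappa$ as the identee of $Rp$, implies that $Rq$ is the coinverter of $R\kappa$. To factor $q r_f\colon Rf \to Q$ through $Rq$ it suffices to verify that $qr_fR\kappa$ is invertible, and this is the main technical step: since $p$ is a strict morphism of pseudo-algebras one has $p r_f = r_g R p$, so $p r_f R\kappa = r_g R(p\kappa) = r_g R(1) = 1$. Thus $r_f R\kappa$ is a 2-cell on $A$ killed by $p$, and by the universal property of $\kappa$ as identee of $p$ it factors uniquely as $\kappa\cdot c$ for some $c\colon RK \to K$; whiskering by $q$ then gives $q r_f R\kappa = q\kappa \cdot c$, which is invertible because $q\kappa$ is. The coinverter therefore produces a unique $r_h\colon Rh \to Q$ with $r_h Rq = q r_f$.

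The remaining properties all reduce to uniqueness arguments. That $r_h$ lies over $B$ (i.e.~$h r_h = Rh$), that $r_h v_h = 1_Q$, and the pseudo-associativity of the algebra structure each come down to checking equality after post-composition with $Rq$ or $R^2 q$ (the latter being a coinverter by iterating $(\dagger)$) and appealing to universality. By construction $q\colon (A,f) \to (Q,h)$ is a strict morphism, and the strictness equation $r_g Rs = s r_h$ for $s$ holds because both sides give $r_g R p = p r_f = s q r_f$ after post-composition with $Rq$. Finally, for the universal property in $\KFib(B)$, given any strict morphism $t\colon (A,f) \to (D,\ell)$ in $\KFib(B)$ with $t\kappa$ invertible, the coinverter in \cC\ already provides a unique $\tilde t\colon Q \to D$ with $\tilde t q = t$, and the analogous post-composition-and-uniqueness arguments force $\ell\tilde t = h$ and $r_\ell R\tilde t = \tilde t r_h$, so $\tilde t$ is a strict morphism in $\KFib(B)$. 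The principal obstacle is the invertibility of $q r_f R\kappa$; once this is secured, everything else is a routine exploitation of the uniqueness halves of the various coinverter universal properties together with the strictness of $p$.
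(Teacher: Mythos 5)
Your proposal follows essentially the same route as the paper's proof: the comparison $s$ and $gs$ come from the one-dimensional universal property of $q$, the key invertibility of $qr_f(R\kappa)$ is obtained exactly as in the paper from the strictness equation $pr_f=r_g(Rp)$ together with factorization of $r_f(R\kappa)$ through the identee $\kappa$, condition $(\dagger)$ makes $Rq$ the coinverter of $R\kappa$ and yields the structure map with $r_{gs}(Rq)=qr_f$, and the remaining verifications (the universal property in $\KFib(B)$, and strictness of $s$ as the special case $c=p$) rely, as in the paper, on the coinverters being epimorphic and on their two-dimensional universal property. One small correction: you should not assert the strict equality $r_hv_h=1_Q$ (precomposing with $q$ only gives $r_hv_hq=qr_fv_f$, and $r_fv_f$ is merely isomorphic to $1_A$ for a pseudo-algebra); instead, as the paper does, the unit and counit exhibiting $v_{gs}\dashv r_{gs}$ are invertible 2-cells induced from $\eta_f$ and $\epsilon_f$ via the two-dimensional universal property of $q$ and $Rq$, not equalities checked by cancellation.
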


\begin{proof}
Recall that $\kappa$ is also the identee of $p$ in $\cC/B$ and consider the corresponding identee diagram (\ref{diag:identee}) in $\cC/B$. Since $p$ (and then $f$) coinverts $\kappa$, the morphisms $s$ and $gs$ in the factorization above are uniquely determined by the universal property of $q$, and this explains why $q$ is a coinverter of $\kappa$ in $\cC/B$.

Since $f$ is a fibration, the unit component $v_f\colon (A,f) \to (B/f,Rf)$ admits a right adjoint $r_f$ in $\cC/B$. We call $\eta_f$ and $\epsilon_f$ the corresponding unit and counit. Likewise, $v_g$ has a right adjoint $r_g$, and $pr_f=r_g(Rp)$ since $p$ is a morphism in $\KFib(B)$. Let us consider the following diagram:
\[
\xymatrix@!=6ex{
B/h \ar@/^/[d]^{r_h} \ar@/^2ex/[r]^{Rk_0}_{}="1" \ar@/_2ex/[r]_{Rk_1}^{}="2" \ar@{=>}"1";"2"^{R\kappa} & B/f \ar@/^/[d]^{r_f} \ar@/^3ex/[rr]^{Rp} \ar[r]_-{Rq} & B/(gs) \ar@{-->}@/^/[d]^{r_{gs}} \ar[r]_-{Rs} & B/g \ar@/^/[d]^{r_g} \\
K \ar@/^/[u]_{\dashv}^{v_h} \ar@/^2ex/[r]^{k_0}_{}="1" \ar@/_2ex/[r]_{k_1}^{}="2" \ar@{=>}"1";"2"^{\kappa} & A \ar@/_3ex/[rr]_-{p} \ar[r]^-{q} \ar@/^/[u]_{\dashv}^{v_f} & Q \ar@/^/[u]_{\dashv}^{v_{gs}} \ar[r]^-{s} & C \ar@/^/[u]_{\dashv}^{v_g}
}
\]
Now, $pr_f(R\kappa)=r_g(Rp)(R\kappa)=1$, hence $r_f(R\kappa)$ factors through $\kappa$ and $qr_f(R\kappa)$ is an isomorphism. By the assumption $(\dagger)$, $Rq$ is the coinverter of $R\kappa$, so there exists a unique $r_{gs}\colon B/(gs)\to Q$ such that $r_{gs}(Rq)=qr_f$. By the 2-dimensional universal property of the coinverters $q$ and $Rq$, one can prove that a unit $\eta_{gs}$ and a counit $\epsilon_{gs}$ are induced by $\eta_f$ and $\epsilon_f$ respectively, making $v_{gs}\dashv r_{gs}$ an adjoint pair in $\cC/B$, so that $gs$ is a fibration. As a consequence of this construction, $q$ turns out to be a morphism of fibrations over $B$.

It remains to show that for each $c\colon (A,f) \to (Y,y)$ in $\KFib(B)$ such that $c\kappa$ is an isomorphism, the unique comparison morphism $t$ in $\cC/B$, induced by the coinverter $q$ and such that $tq=c$, is actually a morphism in $\KFib(B)$. Let us denote by $r_{y}$ the $R$-pseudo-algebra structure on $y$, i.e.\ the right adjoint to $v_y$, and observe that the diagram
\[
\xymatrix@C=6ex{
B/(gs) \ar[d]_{r_{gs}} \ar[r]^-{Rt} & B/y \ar[d]^{r_{y}} \\
Q \ar[r]_-{t} & Y
}
\]
commutes since $Rq$ is a coinverter, then epimorphic, and precomposition with $Rq$ gives the commutative square presenting $c$ as a morphism of $R$-pseudo-algebras.

Finally, the fact that $s$ is in $\KFib(B)$ follows from the last argument, taking $c=p$.
\end{proof}


\begin{Corollary} \label{cor:fib.to.frac}
Let $B$ be an object in \cC\ satisfying $(\dagger)$, $f\colon A \to B$ a fibration in \cC, $\kappa$ its identee in \cC, and $q\colon A \to Q$ its coinverter in \cC. Then the unique comparison morphism $s\colon Q \to B$, such that $sq=f$, is a fibration and $q$ is the coinverter of $\kappa$ in $\KFib(B)$.
\end{Corollary}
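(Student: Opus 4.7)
The plan is to derive the corollary directly from Proposition \ref{prop:coinv}, by regarding $f$ itself as a morphism in $\KFib(B)$ whose codomain is the terminal fibration $1_B\colon B\to B$. The preliminary observation is that $1_B$ is trivially a fibration: by Proposition \ref{prop:fib}(1), the representable $\cC(X, 1_B)$ is the identity functor on $\cC(X,B)$, which is a fibration in \Cat. So $(B, 1_B)$ is an object of $\KFib(B)$; an explicit strict $R$-algebra structure is given by $r_{1_B} = d_0\colon B/B \to B$, with counit built from $\mu_B$.

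The next step is to verify that $f\colon (A, f) \to (B, 1_B)$ is a strict morphism of $R$-pseudo-algebras. Write $\tilde f\colon (B/f, Rf)\to(B/B, d_0)$ for the image of $f$ under $R$ as a 1-cell of $\cC/B$. The required compatibility square amounts to the equality $f\, r_f = d_0\, \tilde f$, and both sides coincide with the underlying morphism $Rf\colon B/f \to B$: the left-hand side because $r_f$ is a 1-cell in $\cC/B$ with codomain $(A, f)$, and the right-hand side because $\tilde f$ is a 1-cell in $\cC/B$ into the free algebra $(B/B, d_0)$. Furthermore, since identees in $\cC/B$ are computed in \cC, the 2-cell $\kappa$ is simultaneously the identee of $f$ as a 1-cell of $\cC/B$ (with structure map $fk_0=fk_1$ on $K$).

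Applying Proposition \ref{prop:coinv} to $p = f$ with $g = 1_B$ then yields the entire statement at once: the coinverter $q$ of $\kappa$ in \cC\ induces the unique $s\colon Q \to B$ with $sq = f$ (since $gs = s$), the proposition asserts that $gs = s$ is a fibration, and it exhibits $q$ as the coinverter of $\kappa$ in $\KFib(B)$. There is no substantial obstacle; the only real work is the small bookkeeping needed to recognise $f$ as a morphism in $\KFib(B)$ to the terminal fibration.
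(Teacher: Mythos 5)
Your proposal is correct and is essentially the paper's own proof: the paper also obtains the corollary by applying Proposition \ref{prop:coinv} to the morphism $f\colon(A,f)\to(B,1_B)$ in $\KFib(B)$. The extra bookkeeping you supply (that $1_B$ is a fibration and that $f$ is a strict morphism of pseudo-algebras into it) is the same implicit verification the paper leaves to the reader.
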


\begin{proof}
Apply Proposition \ref{prop:coinv} to the morphism $f\colon(A,f)\to(B,1_B)$ in $\KFib(B)$.
\end{proof}

\begin{Proposition} \label{prop:coid}
Let $B$ be an object in \cC\ satisfying $(\dagger)$, $p\colon(A,f)\to(C,g)$ a morphism in $\KFib(B)$ and $\kappa$ its identee in \cC. Then the coidentifier $q'\colon A\to Q'$ of $\kappa$ in \cC\ induces a factorization
\[
\xymatrix@!=6ex{
A \ar@/^3ex/[rr]^{p} \ar[dr]_{f} \ar[r]_-{q'} & Q' \ar[d]_(.4){gs'} \ar[r]_-{s'} & C \ar[dl]^{g} \\
& B
}
\]
of $p$ in $\KFib(B)$, and $q'\colon(A,f)\to(Q',g's')$ is the coidentifier of $\kappa$ in $\KFib(B)$.
\end{Proposition}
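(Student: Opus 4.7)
The plan is to mirror the proof of Proposition \ref{prop:coinv} almost verbatim, replacing \emph{coinverter} with \emph{coidentifier} and \emph{isomorphism} with \emph{identity} throughout. To begin with, since the forgetful $\cC/B \to \cC$ creates coidentifiers of identees (exactly as in Lemma \ref{lemma:inv/B}), the coidentifier $q'$ of $\kappa$ in $\cC$ is also the coidentifier in $\cC/B$. This immediately provides the required factorization $p = s' q'$ in $\cC/B$, with $gs'$ and $s'$ determined by the 1-dimensional universal property of $q'$.

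The heart of the argument will be to equip $gs'$ with a pseudo-$R$-algebra structure making $q'$ (and then $s'$) into strict morphisms of pseudo-algebras. Following the coinverter proof, I will compute
\[
p \cdot r_f(R\kappa) \;=\; r_g(Rp)(R\kappa) \;=\; r_g R(p\kappa) \;=\; 1,
\]
so the universal property of the identee $(K,\kappa)$ of $p$ produces a unique 1-cell $\tilde h\colon B/h \to K$ with $\kappa \cdot \tilde h = r_f(R\kappa)$. Whiskering with $q'$ and using $q'\kappa = 1$ gives $q' \cdot r_f(R\kappa) = (q'\kappa)\cdot \tilde h = 1$, so $q' r_f$ coidentifies $R\kappa$. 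By hypothesis $(\dagger)$, $Rq'$ is itself the coidentifier of $R\kappa$, hence there is a unique $r_{gs'}\colon B/(gs') \to Q'$ with $r_{gs'} \cdot Rq' = q' r_f$.

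From this point the argument is purely formal: the 2-dimensional part of the coidentifier universal property of $q'$ and $Rq'$ transports $\eta_f$ and $\epsilon_f$ to a unit and counit exhibiting $v_{gs'} \dashv r_{gs'}$ in $\cC/B$, so that $gs'$ is a fibration, and the very equation $r_{gs'}\cdot Rq' = q' r_f$ expresses that $q'$ is a strict morphism of pseudo-algebras. For the universal property in $\KFib(B)$, given any $c\colon (A,f) \to (Y,y)$ in $\KFib(B)$ with $c\kappa = 1$, the unique comparison $t$ in $\cC/B$ with $tq' = c$ will be a morphism of pseudo-algebras because the compatibility square between $r_{gs'}$ and $r_y$ commutes after precomposition with the epimorphism $Rq'$. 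Taking $c = p$ finally shows that $s'$ itself lies in $\KFib(B)$.

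I do not foresee any serious obstacle, as the entire argument transports the coinverter proof step by step. The one point that demands minor care is the verification that $q' \cdot r_f(R\kappa)$ is \emph{strictly} an identity rather than merely an isomorphism; this is immediate from the factorization of $r_f(R\kappa)$ through $\kappa$ combined with $q'\kappa = 1$, and it is precisely this strictness that licenses the use of $(\dagger)$ in the coidentifier direction rather than the coinverter direction.
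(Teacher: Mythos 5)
Your proposal is correct and is essentially the paper's own proof: the paper simply says the argument of Proposition \ref{prop:coinv} carries over with coinverters replaced by coidentifiers, which is exactly the step-by-step transport you carry out. Your extra remark that $q'r_f(R\kappa)$ is a strict identity (via the factorization of $r_f(R\kappa)$ through the identee $\kappa$ and $q'\kappa=1$) is precisely the point that makes the transport work, and it matches the corresponding step in the coinverter proof.
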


\begin{proof}
The proof is the same as for Proposition \ref{prop:coinv}, replacing coinverters by coidentifiers.
\end{proof}

\begin{Corollary} \label{cor:fib.to.sfrac}
Let $B$ be an object in \cC\ satisfying $(\dagger)$, $f\colon A \to B$ a fibration in \cC, $\kappa$ its identee in \cC, and $q\colon A \to Q$ its coidentifier in \cC. Then the unique comparison morphism $s\colon Q \to B$, such that $sq=f$, is a fibration and $q$ is the coidentifier of $\kappa$ in $\KFib(B)$.
\end{Corollary}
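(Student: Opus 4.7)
My plan is to derive this corollary as a direct application of Proposition \ref{prop:coid}, in exact analogy with how Corollary \ref{cor:fib.to.frac} follows from Proposition \ref{prop:coinv}. The idea is to view the fibration $f\colon A\to B$ as a 1-cell in $\KFib(B)$ of the form $f\colon (A,f)\to (B,1_B)$, where $(B,1_B)$ is a (trivially split) fibration over $B$. This is a morphism in $\KFib(B)$ because $1_B$ preserves the pseudo-algebra structure in a strict way.

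With this reformulation, the hypothesis of Proposition \ref{prop:coid} is satisfied: $B$ satisfies $(\dagger)$, $f$ is a morphism in $\KFib(B)$, and $\kappa$ is its identee in \cC\ (which coincides with its identee in $\KFib(B)$, since identees in $\cC/B$ are computed in \cC\ and, as shown in the Lemma preceding Proposition \ref{prop:coinv}, the identee of a morphism in $\KFib(B)$ is again computed in \cC). Applying the proposition with $(C,g)=(B,1_B)$ yields a factorization $f=sq$ in $\KFib(B)$, where the middle object carries the structure $(Q,gs)=(Q,s)$ of a fibration over $B$. Thus $s\colon Q\to B$ is a fibration in \cC, and simultaneously $q\colon(A,f)\to(Q,s)$ is the coidentifier of $\kappa$ in $\KFib(B)$, which is exactly what is claimed.

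No real obstacle is expected, since this corollary is essentially a specialization of Proposition \ref{prop:coid}. The only point worth double-checking is the compatibility of the two notions of identee (in \cC\ versus in $\KFib(B)$), which is already built into the formulation of the proposition.
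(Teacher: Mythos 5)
Your proposal is correct and matches the paper's own argument: the paper proves Corollary \ref{cor:fib.to.frac} precisely by applying Proposition \ref{prop:coinv} to the morphism $f\colon(A,f)\to(B,1_B)$ in $\KFib(B)$, and Corollary \ref{cor:fib.to.sfrac} is obtained in the same way from Proposition \ref{prop:coid}. Your extra remark on the compatibility of identees in \cC\ and in $\KFib(B)$ is exactly the point covered by the lemma preceding Proposition \ref{prop:coinv}, so nothing is missing.
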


%

%
%

In the cases we are interested in, which will be studied in the next section \ref{sec:casestudy}, the property $(\dagger)$ relies upon the exponentiability of split opfibrations in \cC. In this context, exponentiability is to be understood in a 2-categorical sense: a 1-cell $f$ is exponentiable if the change-of-base 2-functor along $f$ has a right 2-adjoint.

\begin{Lemma} \label{lem:exp.to.dag}
If for an object $B$ in \cC, the comma projection $d_1$ in the diagram
\[
\xymatrix{
B/B \ar[r]^-{d_0} \ar[d]_{d_1} & B \ar@{=}[d] \\
B \ar@{=}[r] \ar@{}[ur]|(.3){}="1" \ar@{}[ur]|(.7){}="2" \ar@{=>}"2";"1"^{\mu_B} & B
}
\]
is exponentiable, then the functor
\[
R\colon \cC/B \to \cC/B
\]
has a right 2-adjoint. As a consequence, $B$ satisfies the condition $(\dagger)$. In particular, this holds for any $B$ when split opfibrations in \cC\ are exponentiable.
\end{Lemma}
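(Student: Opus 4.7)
The plan is to realize the 2-functor $R$ as a composite of left 2-adjoints, so that it inherits a right 2-adjoint and in particular preserves all 2-colimits. By Remark~\ref{rem:R}, one has $R = (d_0)_! \circ d_1^*$, where $d_1^*\colon \cC/B \to \cC/(B/B)$ denotes change-of-base along $d_1$, and $(d_0)_!\colon \cC/(B/B)\to \cC/B$ denotes post-composition with $d_0$.

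First I would invoke the standard slice 2-adjunction $(d_0)_! \dashv d_0^*$, where $d_0^*$ is pullback along $d_0$; this exists since \cC~is finitely complete. Next, the exponentiability hypothesis on $d_1$ is, by definition, the existence of a right 2-adjoint to $d_1^*$. Composing these two right 2-adjoints produces a right 2-adjoint to $R$, so $R$ preserves every 2-colimit in $\cC/B$; in particular it preserves coinverters and coidentifiers of identees, which is exactly condition~$(\dagger)$.

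For the final clause, I would identify $d_1\colon B/B \to B$ with $L(1_B)$, since $B/B$ is the comma $1_B/B$ and its right projection is by definition the value of the 2-functor $L$ at $1_B$. By Remark~\ref{rem:monads}(2), $L(1_B)$ is a free (hence strict) $L$-algebra and therefore represents a split opfibration internally in \cC. Consequently, if all split opfibrations in \cC~are exponentiable, then $d_1$ is exponentiable for every object $B$, and the previous step yields $(\dagger)$ for every such $B$.

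I do not foresee any substantive obstacle: the argument is essentially an assembly of Remark~\ref{rem:R}, the standard 2-adjunction $(d_0)_! \dashv d_0^*$, and the cocontinuity of left 2-adjoints, together with the identification $d_1 = L(1_B)$. The only minor point deserving some care is ensuring that the adjunctions in play are genuine strict 2-adjunctions (not merely biadjunctions), so that the right adjoint of $R$ is strict and $R$ preserves the strict coinverters and coidentifiers as stated in the lemma; this is automatic in the finitely complete 2-category setting in which we are working.
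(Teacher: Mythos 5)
Your proposal is correct and follows essentially the same route as the paper: both factor $R=(d_0)_!\,d_1^*$ as in Remark~\ref{rem:R} and exhibit it as a composite of left 2-adjoints (exponentiability of $d_1$ giving the right 2-adjoint to $d_1^*$, and $(d_0)_!\dashv d_0^*$), so that $R$ preserves all 2-colimits and in particular the coinverters and coidentifiers in $(\dagger)$. Your identification $d_1=L(1_B)$ via Remark~\ref{rem:monads}(2) simply makes explicit the justification of the final clause that the paper leaves implicit.
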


\begin{proof}
It is easy to see that the functor $R$ can be described by means of the following construction:
\[
\xymatrix@!=5ex{
B/f \ar[r]_{d_1^*f} \ar@/^3ex/[rr]^{Rf} \ar[d]_{d_1} & B/B \ar[r]_-{d_0} \ar[d]_{d_1} & B \ar@{=}[d] \\
A \ar[r]_-{f} & B \ar@{=}[r] \ar@{}[ur]|(.3){}="1" \ar@{}[ur]|(.7){}="2" \ar@{=>}"2";"1"^{\mu_B} & B.
}
\]
That is, $R=(d_0)_{!}d_1^*$, i.e.\ the composite of the change-of-base 2-functor along $d_1$ with the composition 2-functor with $d_0$, which is left 2-adjoint to $d_0^*$. Hence $R$ is left 2-adjoint to $\Pi_{d_1}d_0^*$, where $\Pi_{d_1}$ denotes the right 2-adjoint to $d_1^*$, which exists by assumption.
\end{proof}

\begin{Remark} \label{rem:dagger'}
{\rm
If instead of $(\dagger)$ we ask for
\begin{itemize}
 \item[($\dagger'$)] The monad $L\colon \cC/B\to \cC/B$ preserves coinverters and coidentifiers of identees,
\end{itemize}
then the results of \ref{prop:coinv} and \ref{prop:coid} hold with $\KFib(B)$ replaced by $\OpFib_{\cC}(B)$. Accordingly, if $d_0$ is exponentiable, and in particular when split fibrations are exponentiable in \cC, then $L$ admits a right 2-adjoint and $(\dagger')$ holds for $B$.
}
\end{Remark}

\subsection{Case study: \Cat\ and $\Fib(B)$} \label{sec:casestudy}

It is well-known that fibrations in \Cat\ are exponentiable \cite{Giraud} in the classical 1-categorical sense. As observed by Johnstone in \cite{Johnstone}, this property holds also in the 2-categorical sense recalled above. As a consequence, by Lemma \ref{lem:exp.to.dag} and Remark \ref{rem:dagger'}, we have:

\begin{Corollary} \label{cor:cat.dag}
In the 2-category \Cat, each object $B$ satisfies the conditions $(\dagger)$ and $(\dagger')$.
\end{Corollary}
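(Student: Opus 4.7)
The plan is to apply Lemma~\ref{lem:exp.to.dag} to obtain $(\dagger)$ and its opfibrational analogue, stated in Remark~\ref{rem:dagger'}, to obtain $(\dagger')$. By those two results, the corollary reduces to the claim that, for every object $B$ in $\Cat$, the comma projections $d_1\colon B/B\to B$ and $d_0\colon B/B\to B$ are 2-exponentiable; in fact, by Corollary~\ref{cor:p0p1}, the first is a (split) opfibration and the second a (split) fibration, so it would be enough to show that any split fibration and any split opfibration in $\Cat$ is 2-exponentiable.

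My first move would be to handle the fibration case directly. The preceding paragraph already records the key fact that is needed: by the Giraud--Conduch\'e theorem every fibration in $\Cat$ is exponentiable in the 1-categorical sense, and Johnstone (\cite{Johnstone}) upgrades this to 2-exponentiability, i.e.\ the change-of-base 2-functor along any fibration admits a right 2-adjoint. Applied to $d_0\colon B/B\to B$, this is exactly the hypothesis of the dual of Lemma~\ref{lem:exp.to.dag} discussed in Remark~\ref{rem:dagger'}, and so yields $(\dagger')$ for $B$.

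For $(\dagger)$ I would obtain the opfibrational version by duality. The involution $(-)^{\op}\colon\Cat\to\Cat$ is a 2-iso\-mor\-phism (sending a natural transformation to its opposite), and it interchanges fibrations with opfibrations. Since the change-of-base construction is manifestly compatible with this involution, 2-exponentiability transfers: $f$ is 2-exponentiable in $\Cat$ iff $f^{\op}$ is. Consequently, Johnstone's theorem for fibrations implies the corresponding statement for opfibrations, so the opfibration $d_1\colon B/B\to B$ is 2-exponentiable, and Lemma~\ref{lem:exp.to.dag} delivers $(\dagger)$ for $B$.

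There is essentially no hard step here: the corollary is a direct packaging of Lemmas \ref{lem:exp.to.dag}, Remark~\ref{rem:dagger'} and the Giraud/Johnstone result recalled just above it. The only point that requires a moment's thought is the duality argument in the opfibration case, and if one preferred to avoid invoking the involution, one could instead note that both fibrations and opfibrations are instances of Conduch\'e fibrations, for which the full Giraud--Conduch\'e--Johnstone analysis applies uniformly and yields 2-exponentiability on the nose.
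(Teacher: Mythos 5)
Your proposal is correct and follows essentially the same route as the paper, which likewise deduces the corollary from the Giraud--Johnstone 2-exponentiability of fibrations in \Cat\ via Lemma~\ref{lem:exp.to.dag} and Remark~\ref{rem:dagger'}. The only difference is that you spell out the opfibration side (exponentiability of $d_1$) via the $(-)^{\op}$ duality, a step the paper leaves implicit, and your matching of $d_0$ (fibration, giving $(\dagger')$) and $d_1$ (opfibration, giving $(\dagger)$) with the two hypotheses is accurate.
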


One can extend the last property from \Cat\ to $\Fib(B)$ for each $B$, by means of the pseudo-functorial interpretation of fibrations in \Cat.

\begin{Remark} \label{rem:coinv.pseudofunct}
{\rm
As far as coinverters and coidentifiers of identees in $\Fib(B)$ are concerned, whose existence is guaranteed by Proposition \ref{prop:coinv} and Corollary \ref{cor:cat.dag}, let us observe that their construction can be performed fibrewise. In fact, given an identee
\[
\xymatrix{
K \ar@/^2ex/[r]_{}="1" \ar@/_2ex/[r]^{}="2" \ar@{=>}"1";"2"^{\kappa} & A
}
\]
in $\Fib(B)$, the collection of the coinverters (respectively coidentifiers) $q_b$ of its restrictions $\kappa_b$ to the fibres
\[
\xymatrix@!=6ex{
K_b \ar@/_2ex/[r]^{}="1" \ar@/^2ex/[r]_{}="2" \ar@{=>}"2";"1"^{\kappa_b} \ar[d]_{\beta^*} & A_b \ar[r]^{q_b} \ar[d]^{\beta^*} & Q_b \ar@{-->}[d]^{\beta^*} \\
K_{b'} \ar@/_2ex/[r]^{}="1" \ar@/^2ex/[r]_{}="2" \ar@{=>}"2";"1"^{\kappa_{b'}} & A_{b'} \ar[r]_{q_{b'}} & Q_{b'}
}
\]
gives rise to a natural transformation between pseudo-functors (see Section \ref{sec:pseudo} for details). It is easy to check that the corresponding morphism in $\Fib(B)$ is the coinverter (resp. coidentifier) of $\kappa$.
}
\end{Remark}

\begin{Lemma} \label{lem:internal.fib.pres}
For each internal fibration $p\colon(E,e)\to(A,a)$ in $\Fib(B)$, the change of base 2-functor $p^*\colon\Fib(B)/(A,a)\to\Fib(B)/(E,e)$ preserves coinverters and coidentifiers of identees.
\end{Lemma}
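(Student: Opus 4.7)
The plan is to reduce the statement to a fibrewise computation in \Cat\ via the pseudo-functorial interpretation of $\Fib(B)$ as the 2-category of pseudo-functors $B^{\op}\to\Cat$, pseudo-natural transformations, and modifications. Under this 2-equivalence, an object $(X,x)\in\Fib(B)$ corresponds to a pseudo-functor $X_\bullet$ with $X_b$ the fibre of $x$ over $b$, and a morphism $f\colon(X,x)\to(Y,y)$ corresponds to a pseudo-natural transformation with components $f_b\colon X_b\to Y_b$.

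First I would observe that the pullback functor $p^*$ is computed fibrewise: given $f\colon(X,x)\to(A,a)$ in $\Fib(B)$, the morphism $p^*f\colon p^*(X,x)\to(E,e)$ has $b$-th component the ordinary pullback $p_b^*f_b$ in \Cat, since finite limits in $\Fib(B)$ commute with evaluation at each $b\in B$. Second, I would extend Remark \ref{rem:coinv.pseudofunct} to the slice 2-categories $\Fib(B)/(A,a)$ and $\Fib(B)/(E,e)$: the identee of a morphism in a slice is a finite limit and hence is computed fibrewise, and the fibrewise coinverter (respectively coidentifier) of its components assembles into a pseudo-natural transformation, by the same argument as in the Remark, while the structural map to $(A,a)$ or $(E,e)$ is induced at each $b$ by the universal property.

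Third, I would translate the hypothesis that $p$ is an internal fibration in $\Fib(B)$ into the fibrewise statement that each component $p_b\colon E_b\to A_b$ is a fibration in \Cat. This can be deduced either from the representable characterization in Proposition \ref{prop:fib}.1 applied to suitable test objects in $\Fib(B)$, or from the Chevalley criterion, noting that the relevant comma objects in $\Fib(B)$ are again computed fibrewise. I expect this to be the main technical obstacle of the argument, since it requires carefully linking the 2-categorical structure of $\Fib(B)$ with the pseudo-functorial picture and checking that the pseudo-naturality isomorphisms do not obstruct the fibrewise fibration structure.

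With these three reductions in place, the conclusion is immediate: fibrations in \Cat\ are exponentiable in the 2-categorical sense (which underlies Corollary \ref{cor:cat.dag}), so each $p_b^*\colon \Cat/A_b\to\Cat/E_b$ admits a right 2-adjoint and thus preserves all 2-colimits, in particular coinverters and coidentifiers of identees. Combining this fibrewise preservation with the fibrewise descriptions of $p^*$ and of the relevant colimits in the slices yields the desired preservation property for $p^*$ in $\Fib(B)$.
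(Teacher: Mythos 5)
Your proposal is correct and follows essentially the same route as the paper's proof: restrict everything to the fibres over each $b$, use Remark \ref{rem:coinv.pseudofunct} (and limit commutation) to see that both $p^*$ and the coinverters/coidentifiers of identees are computed fibrewise, and conclude from the 2-categorical exponentiability of the fibrations $p_b$ in \Cat. The step you flag as the main obstacle---that an internal fibration in $\Fib(B)$ has fibration components $p_b$ in \Cat---is precisely the known fact (the dual of Theorem 2.8 in \cite{CMMV17}) which the paper simply takes as given, so it poses no real difficulty.
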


\begin{proof}
We will prove the result concerning internal fibrations and coinverters, the variations involving opfibrations and coidentifiers are obtained analogously.

Let the arrow $q\colon((C,c),f)\to((D,d),g)$ in the diagram
\[
\xymatrix@!=1.5ex{
K \ar@/_2ex/[ddrr]_{k} \ar@/^2.5ex/[rr]^{u}_{}="1" \ar@/_2.5ex/[rr]_{v}^{}="2" \ar@{=>}"1";"2"^{\kappa} & & C \ar[dd]_{c} \ar[rrr]^{f} \ar[dr]^{q} & & & A \ar@/^3ex/[ddlll]^{a} \\
& & & D \ar[urr]_{g} \ar[dl]_{d} \\
& & B
}
\]
be the coinverter in $\Fib(B)/(A,a)$ of an identee $\kappa$, and consider its image under the change of base 2-functor $p^*$, i.e.\ the upper part of the next diagram (we omit all arrows over $B$, all pullbacks provide in fact fibrations over $B$):
\[
\xymatrix@R=3ex@C=5ex{
K \times_A E \ar[dd] \ar@/^2.5ex/[rr]^{p^*u}_{}="1" \ar@/_2.5ex/[rr]_{p^*v}^{}="2" \ar@{=>}"1";"2"^{p^*\kappa} & & C \times_A E \ar[dd] \ar[rr]^{p^*f} \ar[dr]_{p^*q} & & E \ar[dd]^{p} \\
& & & D \times_A E \ar[ur]_{p^*g} \ar[dd] \\
K \ar@/^2.5ex/[rr]^{u}_{}="1" \ar@/_2.5ex/[rr]_{v}^{}="2" \ar@{=>}"1";"2"^{\kappa} & & C \ar[rr]^(.35){f} \ar[dr]_{q} & & A. \\
& & & D \ar[ur]_{g}
}
\]
We would like to show that $p^*q$ is the coinverter of the identee $p^*\kappa$ in $\Fib(B)/(E,e)$. To this end, we consider the restriction of the above diagram to the fibres over any object $b$ in $B$. By limit commutation, the latter is the same as the corresponding change of base diagram in the fibres over $b$:
\[
\xymatrix@R=3ex@C=5ex{
K_b \times_{A_b} E_b \ar[dd] \ar@/^2.5ex/[rr]^{p_b^*u_b}_{}="1" \ar@/_2.5ex/[rr]_{p_b^*v_b}^{}="2" \ar@{=>}"1";"2"^{p_b^*\kappa_b} & & C_b \times_{A_b} E_b \ar[dd] \ar[rr]^{p_b^*f_b} \ar[dr]_{p_b^*q_b} & & E_b \ar[dd]^{p_b} \\
& & & D_b \times_{A_b} E_b \ar[ur]_{p_b^*g_b} \ar[dd] \\
K_b \ar@/^2.5ex/[rr]^{u_b}_{}="1" \ar@/_2.5ex/[rr]_{v_b}^{}="2" \ar@{=>}"1";"2"^{\kappa_b} & & C_b \ar[rr]^(.35){f_b} \ar[dr]_{q_b} & & A_b. \\
& & & D_b \ar[ur]_{g_b}
}
\]
Now observe that, by Remark \ref{rem:coinv.pseudofunct}, $q_b$ is the coinverter of $\kappa_b$. Moreover, since $p_b$ is a fibration in \Cat\ by assumption, it is exponentiable, hence $p_b^*$ is a left 2-adjoint and $p_b^*q_b$ is the coinverter of $p_b^*\kappa_b$. Finally, again by Remark \ref{rem:coinv.pseudofunct}, $p^*q$ is the coinverter of $p^*\kappa$ in $\Fib(B)$, and hence in $\Fib(B)/(A,a)$.
\end{proof}

\begin{Proposition} \label{prop:dag.to.fib}
In the 2-category $\Fib(B)$, each object satisfies the conditions $(\dagger)$ and $(\dagger')$.
\end{Proposition}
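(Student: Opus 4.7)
The plan is to combine Remark~\ref{rem:R} with Lemma~\ref{lem:internal.fib.pres}. I would fix an object $(A,a)$ of $\Fib(B)$ and form, in $\Fib(B)$ itself, the comma object $(A,a)/(A,a)$, with its two projections $d_0,d_1\colon (A,a)/(A,a)\to (A,a)$. Corollary~\ref{cor:p0p1} applied inside $\Fib(B)$ then tells me that $d_0$ is an internal fibration and $d_1$ an internal opfibration of $\Fib(B)$.

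Next I would read Remark~\ref{rem:R} in $\cC=\Fib(B)$: the monad $R\colon \Fib(B)/(A,a)\to \Fib(B)/(A,a)$ factors as $R=(d_0)_!\circ d_1^*$. The functor $(d_0)_!$, being left 2-adjoint to the pullback $d_0^*$, preserves all colimits, and in particular coinverters and coidentifiers of identees. For $d_1^*$ I would invoke the opfibration dual of Lemma~\ref{lem:internal.fib.pres}, namely that change-of-base along an internal opfibration in $\Fib(B)$ preserves coinverters and coidentifiers of identees. Composing the two factors establishes $(\dagger)$ for $(A,a)$.

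For $(\dagger')$ the argument is entirely symmetric: by the analogous description $L=(d_1)_!\circ d_0^*$, the internal fibration $d_0$ lets me apply Lemma~\ref{lem:internal.fib.pres} directly to $d_0^*$, while $(d_1)_!$ preserves all colimits as a left 2-adjoint.

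The real bookkeeping step — and the only piece that is not essentially a citation — is the opfibration version of Lemma~\ref{lem:internal.fib.pres}, which is only alluded to in its proof. I expect this to be genuinely routine: the proof of that lemma is performed fibrewise and reduces to the exponentiability of each fibre $p_b$ in $\Cat$, and opfibrations in $\Cat$ are exponentiable on exactly the same grounds as fibrations, so the argument transports verbatim. A secondary point I would want to record is that the comma projections $d_0,d_1$ considered above really live in $\Fib(B)$ as internal (op)fibrations, not just in $\Cat/B$; this follows from the fact that $\Fib(B)$ has comma objects inherited from $\Cat/B$, together with Corollary~\ref{cor:p0p1} applied internally.
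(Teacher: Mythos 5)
Your proposal is correct and follows essentially the same route as the paper: comma square of $(A,a)$ over itself in $\Fib(B)$, Corollary~\ref{cor:p0p1} to identify $d_0$ and $d_1$ as an internal fibration and opfibration, the factorizations $R=(d_0)_!d_1^*$ and $L=(d_1)_!d_0^*$ as in Lemma~\ref{lem:exp.to.dag}, and Lemma~\ref{lem:internal.fib.pres} (whose proof already records that the opfibration/coidentifier variants go through analogously, via exponentiability of opfibrations in $\Cat$) for the pullback functors. The only difference is that you make the opfibration dual of Lemma~\ref{lem:internal.fib.pres} explicit, which the paper leaves implicit in its citation.
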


\begin{proof}
Let $a\colon A \to B$ be a fibration of categories, then by Corollary \ref{cor:p0p1} the projections $d_0$ and $d_1$ of the comma square in $\Fib(B)$
\[
\xymatrix{
(A,a)/(A,a) \ar[r]^-{d_0} \ar[d]_{d_1} & (A,a) \ar@{=}[d] \\
(A,a) \ar@{=}[r] \ar@{}[ur]|(.3){}="1" \ar@{}[ur]|(.7){}="2" \ar@{=>}"2";"1"^{\mu_{(A,a)}} & (A,a)
}
\]
are an internal fibration and opfibration respectively. As a consequence, by Lemma \ref{lem:internal.fib.pres}, the corresponding change of base 2-functors $d_0^*$ and $d_1^*$ preserve coinverters and coidentifiers of identees. Now likewise in the proof of Lemma \ref{lem:exp.to.dag}, the thesis follows from the fact that $R=(d_0)_{!}d_1^*$ and $(d_0)_{!}$ is a left adjoint (and similarly for $L$).
\end{proof}

\begin{Proposition}
Let $p\colon(A,f)\to(C,g)$ be an internal fibration in $\Fib(B)$. Then the morphism $s$ in the factorization of Proposition \ref{prop:coinv} is an internal fibration in $\Fib(B)$.
\end{Proposition}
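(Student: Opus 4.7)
The strategy is to apply Corollary~\ref{cor:fib.to.frac} within the 2-category $\cC = \Fib(B)$, taking $(C,g)$ as the base and the internal fibration $p$ in the role of a fibration ``$f\colon A\to B$'' of that corollary. By Proposition~\ref{prop:dag.to.fib}, every object of $\Fib(B)$ satisfies $(\dagger)$, in particular $(C,g)$; and the remaining hypothesis of the corollary, that $p$ is a fibration in $\Fib(B)$, is exactly what is being assumed.

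The content of the argument is then to recognize that the identee and coinverter produced at this higher level reproduce the $\kappa$ and $q$ already featuring in Proposition~\ref{prop:coinv}, so that the comparison map delivered by Corollary~\ref{cor:fib.to.frac} is the same $s$. For the identee, I would invoke the lemma immediately preceding Proposition~\ref{prop:coinv}: identees in $\KFib(B)$ are computed as in the ambient 2-category, so, specialised to $\cC=\Cat$, the identee of $p$ in $\Fib(B)$ has underlying data $(K,\kappa)$ as in $\Cat$, endowed with the structure map $h\colon K\to B$ supplied by $f\kappa=gp\kappa=1$. For the coinverter, Proposition~\ref{prop:coinv} itself, at its original level with $\cC=\Cat$, already establishes that $q$ carries a canonical fibration structure $gs\colon Q\to B$ and is the coinverter of $\kappa$ in $\Fib(B)=\KFib(B)$. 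Consequently, the $s\colon(Q,gs)\to(C,g)$ determined by the universal property of $q$ in $\Cat$ coincides with the one determined by the universal property of the coinverter in $\Fib(B)$, and Corollary~\ref{cor:fib.to.frac} then forces this $s$ to be a fibration in the 2-category $\Fib(B)$, which is exactly the assertion that $s$ is an internal fibration in $\Fib(B)$.

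I do not anticipate a substantive obstacle: the whole argument amounts to a single reapplication of the coinverter construction one level up, made possible by the fact that Proposition~\ref{prop:dag.to.fib} transfers the property $(\dagger)$ from $\Cat$ to $\Fib(B)$. The only care required is to verify that the two universal constructions agree at the two levels, and this reduces entirely to the cited lemma on identees and to Proposition~\ref{prop:coinv} itself.
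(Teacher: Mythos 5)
Your proposal is correct and is essentially the paper's own argument: the paper's proof consists precisely of applying Corollary~\ref{cor:fib.to.frac} inside $\cC=\Fib(B)$ with base $(C,g)$, the hypothesis $(\dagger)$ being supplied by Proposition~\ref{prop:dag.to.fib}. The only difference is that you make explicit the identification (via the lemma on identees in $\KFib(B)$ and Proposition~\ref{prop:coinv}) of the identee/coinverter at the level of $\Fib(B)$ with the data $\kappa$, $q$, $s$ of Proposition~\ref{prop:coinv}, which the paper leaves implicit.
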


\begin{proof}
By Proposition \ref{prop:dag.to.fib}, we can apply Corollary \ref{cor:fib.to.frac} to the fibration $p$ in $\Fib(B)$.
\end{proof}

\section{Two factorization systems for (fibrewise) opfibrations in $\Fib(B)$}

\subsection{Two factorization systems in \Cat}

Let us consider the diagram
\[
\xymatrix@!=6ex{
W \ar@/^2ex/[r]_{}="1" \ar@/_2ex/[r]^{}="2" \ar@{=>}"1";"2"^{\omega} & A \ar[d]_{f} \ar[r]^{q} & Q \ar[dl]^{s} \\
& B
}
\]
in \Cat, where $q$ is the coinverter of the invertee $\omega$ of $f$.

The comparison functor $s$ is not conservative in general. One has to repeat this ``invertee-coinverter'' procedure possibly infinitely many times in order to get a conservative comparison, and an actual factorization system in \Cat\ \cite[Theorem C.0.31]{Joyal}. A similar phenomenon occurs when taking the ``identee-coidentifier'' analogue of the previous procedure, which allows to factor any functor as a (possibly infinite) sequence of coidentifiers followed by a discrete functor, i.e.~a functor whose fibres are discrete, yielding another factorization system in \Cat.

We are going to show that if we restrict ourselves to fibrations, both factorization procedures above simplify to a single-step factorization. Let us start with the second one showing that, for fibrations, it coincides with the comprehensive factorization system introduced in \cite{StW}. This actually holds not only in \Cat, but in any 2-category $\Cat(\cE)$ of internal categories where the construction of the comprehensive factorization of any functor provided in \cite{StV} is still valid, as, for example, when \cE\ is a finitely cocomplete locally cartesian closed category, like any topos \cE.

\begin{Proposition} \label{prop:comprehensive.cat}
Let $f\colon A \to B$ be a fibration in $\Cat(\cE)$ as above. The coidentifier of the identee of $f$, together with the comparison functor, factorizes $f$ into a final functor followed by a discrete fibration, giving then the comprehensive factorization of $f$. Starting with $f$ opfibration, the same procedure yields the dual comprehensive factorization of $f$ given by an initial functor followed by a discrete opfibration.
\end{Proposition}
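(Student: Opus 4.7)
The plan is to verify three properties and then invoke the uniqueness of the comprehensive factorization on $\Cat(\cE)$: (i) $s$ is a fibration, (ii) $q$ is final, and (iii) $s$ is a discrete fibration.

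For (i), I would observe that under the hypothesis on $\cE$ fibrations in $\Cat(\cE)$ are exponentiable, so the argument of Corollary \ref{cor:cat.dag} (via Lemma \ref{lem:exp.to.dag}) carries over to show every object of $\Cat(\cE)$ satisfies $(\dagger)$. Corollary \ref{cor:fib.to.sfrac}, applied to the fibration $f$, then yields $s$ as a fibration.

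For (ii), I plan to check that $q$ is left orthogonal to every discrete fibration. Given a commutative square $d'u = vq$ with $d'\colon X\to Y$ a discrete fibration, one has $d'u\kappa_f = vq\kappa_f = v\cdot 1 = 1$, and since the identee of $d'$ is trivial (its fibres being discrete), this forces $u\kappa_f = 1$. The universal property of $q$ as coidentifier of $\kappa_f$ then produces a unique $w\colon Q\to X$ with $wq = u$, and epimorphicity of $q$ forces $d'w = v$. Thus $q$ is final.

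For (iii), I would show that the identee $\kappa_s$ of $s$ is an identity, which together with (i) makes $s$ a discrete fibration. The fibration structure of $f$ is essential here: using cartesian lifts in $A$, every morphism in the coidentifier $Q$ admits a single-arrow representative from $A$ (any representing zig-zag through an identification $q(b_1)=q(b_2)$ can be straightened by replacing a vertical factor with a cartesian lift of the appropriate $f$-image ending at the desired object). Granting this, any $s$-vertical $\gamma\colon y_1\to y_2$ in $Q$ can be written as $\gamma = q(k)$ for a single $k\colon a_1\to a_2$ in $A$; the hypothesis $s\gamma = 1$ gives $f(k) = 1$, so $k\in K_f$, whence $q(k) = 1$ by the coidentifier property. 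Hence $\gamma = 1$ and $s$ is a discrete fibration. Combining (ii) and (iii), $f = sq$ coincides by uniqueness with the comprehensive factorization of \cite{StV}. The opfibration case is entirely dual, using $(\dagger')$, initial functors, and discrete opfibrations.

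The main obstacle will be the straightening claim used in (iii): making precise, internally to $\Cat(\cE)$, that for a fibration $f$ every morphism of the coidentifier $Q$ is the $q$-image of a single morphism in $A$, rather than merely of a composable chain through identified objects. This is the fibration-specific input that collapses the potentially transfinite sequence of coidentifiers of \cite{StV} down to the single step given by the identee/coidentifier construction.
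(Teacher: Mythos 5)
Your steps (i) and (ii) are fine in spirit (and (ii) is a clean orthogonality argument), but the proof collapses exactly where you flag it: step (iii) is not a technical loose end, it is the entire content of the proposition. Your argument that every morphism of the coidentifier $Q$ has a single-arrow representative in $A$, obtained by ``straightening'' zig-zags using cartesian lifts, presupposes an explicit elementwise description of the coidentifier as a quotient category whose arrows are equivalence classes of composable strings. Even in \Cat\ this description is delicate (the coidentifier of an identee is not bijective on objects, and relating classes of strings to single arrows is precisely what must be proved); in $\Cat(\cE)$ for a general \cE\ no such pointwise description of the colimit is available at all, so the argument cannot even be set up internally. The paper sidesteps this entirely: it runs the \cite{StV} construction on the \emph{free} algebra $Rf$ (a split fibration), where the reflection into discrete fibrations is the fibrewise $\pi_0$, i.e.\ the coidentifier $d$ of $\kappa_{Rf}$ with codomain $\pi_{0B}(B/f)$, and discreteness of the comparison $p$ is guaranteed by \cite{StV}. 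The only new work is then to transfer along the adjunction $v\dashv r$ in $\Cat(\cE)/B$ (available because $f$ is a fibration): one checks $q\eta=1$ and $d\epsilon=1$, whence $qrv=q$, $dvr=d$, and a short diagram chase shows $dv$ is the coidentifier of $\kappa_f$. So the discreteness you are trying to prove by hand is imported from the known factorization of $Rf$, rather than re-derived from an explicit model of the colimit.

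A secondary issue: your step (i) invokes exponentiability of fibrations in $\Cat(\cE)$ to get $(\dagger)$ and then Corollary \ref{cor:fib.to.sfrac}. Corollary \ref{cor:cat.dag} is established only for \Cat\ (via Giraud--Johnstone), and the proposition's hypothesis on \cE\ is merely that the \cite{StV} construction is valid; exponentiability of internal fibrations is an extra assumption you would have to justify separately. The paper's proof needs neither $(\dagger)$ nor the fibration property of $s$ as an intermediate step, since $s=p$ is discrete by construction. If you want to salvage your route, you would have to replace the zig-zag straightening by an internal argument --- essentially reproving the key lemma of \cite{StV} --- at which point you are back to the paper's strategy.
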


\begin{proof}
We consider just the case of fibrations.
Following the approach of Section 3 in \cite{StV}, we perform the comprehensive factorization of $f$ by taking the free $R$-algebra $Rf$, which is a split fibration, and then reflecting it into a discrete fibration $p$:
\[
\xymatrix@!=7ex{
K_f \ar@/_/[r]_{\bar v}^{\top} \ar@/_2ex/[d]^{}="1" \ar@/^2ex/[d]_{}="2" \ar@{=>}"1";"2"^{\kappa_f} & K_{Rf} \ar@/_/[l]_{\bar r} \ar@/_2ex/[d]^{}="1" \ar@/^2ex/[d]_{}="2" \ar@{=>}"1";"2"_{\kappa_{Rf}} & Q \\
A \ar@/_/[r]_{v}^{\top} \ar@/^/[rru]^(.75){q} \ar[d]_{f} & B/f \ar[d]_{Rf} \ar@/_/[l]_{r} \ar[r]_-d & \pi_{0B}(B/f) \ar[u]^{t} \ar[d]_{p} \\
B \ar@{=}[r] & B \ar@{=}[r] & B.
}
\]
By construction of the above reflection, $d$ is the coidentifier of the identee $\kappa_{Rf}$ of $Rf$. Considering the adjunction $v \dashv r$ provided by the fact that $f$ is a fibration, we get $dv\kappa_f=d\kappa_{Rf}\bar{v}=1$, where $\kappa_f$ is the identee of $f$. Let now $q$ be a functor such that $q\kappa_f=1$, and consider the unit $\eta\colon 1 \to rv$ of the adjunction $v\dashv r$ in $\Cat(\cE)/B$. Then $f\eta=1$ and $\eta$ is contained in $\kappa_f$, so that $q\eta=1$ as well, and $qrv=q$. On the other hand, the counit $\epsilon\colon vr\to 1$ is such that $(Rf)\epsilon=1$, so it is contained in $\kappa_{Rf}$ and hence $d\epsilon=1$ and $dvr=d$.

Now, $qr\kappa_{Rf}=q\kappa_f\bar{r}=1$, so by the universal property of the coidentifier $d$ there exists a unique $t$ such that $td=qr$. Hence $q=qrv=tdv$, and $t$ is unique with this last property. Indeed, if $t'dv=q$ for some $t'$, then $t'd=t'dvr=qr=td$ and hence $t'=t$ since $d$ is cancellable. This proves that $dv$ is the coidentifier of $\kappa_f$, and then it is final \cite{StV}.
\end{proof}

\begin{Remark}{\rm
The last result also shows that, when $f$ is a fibration, the factorization of $f$ given by (sequence of coidentifiers, discrete functor) reduces to a single coidentifier and coincides with the comprehensive factorization.
}
\end{Remark}

In the special case of \Cat, Proposition \ref{prop:comprehensive.cat} can be proved directly by means of the pseudo-functorial interpretation of fibrations. This indeed is what we are going to do in order to obtain the analogous result, where coidentifiers are replaced by coinverters and discrete fibrations are replaced by fibrations in groupoids (i.e.~fibrations whose identee is an isomorphism).

\begin{Proposition} \label{prop:isocompr.cat}
Each fibration (respectively opfibration) $f\colon A \to B$ in \Cat\ admits a factorization given by the coinverter of the identee of $f$ followed by a fibration (resp.~opfibration) in groupoids. This factorization of $f$ coincides with the one given by (sequence of coinverters, conservative functor).
\end{Proposition}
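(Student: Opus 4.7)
The plan is to treat only the fibration case (the opfibration case is dual) and to reduce the proof to the general machinery of Section~3 together with the fibrewise description of coinverters in $\Fib(B)$. The two claims --- that $s$ is a fibration in groupoids, and that the single-step factorization $f=sq$ coincides with the iterative (sequence-of-coinverters, conservative functor) factorization --- will follow respectively from Corollary~\ref{cor:fib.to.frac} combined with Remark~\ref{rem:coinv.pseudofunct}, and from the two parts of Corollary~\ref{cor:isofib.cons=gpd(isofib)}.

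First, I would use Corollary~\ref{cor:cat.dag} to note that $B\in\Cat$ satisfies $(\dagger)$, so that Corollary~\ref{cor:fib.to.frac} applies and gives that the comparison $s\colon Q\to B$ is a fibration and that $q$ is the coinverter of $\kappa$ also in $\Fib(B)$. Next, I would compute the fibres of $s$ via Remark~\ref{rem:coinv.pseudofunct}: restricting the identee diagram of $f$ to the fibre over $b\in B$ yields the tautological 2-cell $d_0\Rightarrow d_1\colon A_b^{\to}\to A_b$ between source and target of the arrow category of $A_b$, since every morphism of $A_b$ is $f$-vertical. Its coinverter in \Cat\ is the groupoid reflection of $A_b$, so reassembling these reflections into a pseudo-functor on $B$ recovers $s$, whose fibres are thus all groupoids.

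To obtain the coincidence with the iterative factorization, I would invoke Corollary~\ref{cor:isofib.cons=gpd(isofib)}. Since $f$ is a fibration, and in particular an isofibration, part~(2) identifies the coinverter of its identee with the coinverter of its invertee, so $q$ is already the first step of the invertee-coinverter iteration. Since $s$ is moreover an isofibration with groupoidal fibres, part~(1) shows that $s$ is conservative, whence the iteration stabilises after one step and the two factorizations agree.

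The main technical point I expect to require care lies in the second step: while Remark~\ref{rem:coinv.pseudofunct} handles coinverters, one still has to check that forming the identee commutes with restriction to a fibre, so that $\kappa_b$ really is the source-to-target 2-cell on $A_b^{\to}$. Once this is verified, all fibres of the coinverter are simply groupoid reflections and the remainder of the argument is formal.
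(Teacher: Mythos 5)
Your proposal is correct and takes essentially the same route as the paper: the paper also argues fibrewise, identifying $(K_b,\kappa_b)$ with $(A_b/A_b,\mu_{A_b})$ so that each fibre of the comparison is the groupoid reflection $\pi(A_b)$, and then concludes via Corollary \ref{cor:isofib.cons=gpd(isofib)}. The only cosmetic difference is that you derive fibrationhood of $s$ from Corollary \ref{cor:cat.dag} and Corollary \ref{cor:fib.to.frac}, whereas the paper reads it off directly from the pseudo-functor $\pi[f]\colon B^{\op}\to\Gpd$; your explicit appeal to part (2) of Corollary \ref{cor:isofib.cons=gpd(isofib)} for the agreement with the (sequence of coinverters, conservative functor) factorization is exactly the intended argument, stated there only implicitly.
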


\begin{proof}
Let us denote by
\[
\xymatrix{
\Cat \ar@<1.2ex>[r]^{\pi}_{\bot} & \Gpd \ar@{_(->}@<1.2ex>[l]^{i}
}
\]
the reflection of categories in groupoids, where the left 2-adjoint $\pi$ can be obtained by taking as unit component, for each category $A$, the coinverter $\eta_A$ of the 2-cell $\mu_A$ associated with the comma category $A/A$:
\[
\xymatrix@C=6ex{
A/A \ar@/_2ex/[r]^{}="1" \ar@/^2ex/[r]_{}="2" \ar@{=>}"2";"1"^{\mu_A} & A \ar[r]^-{\eta_A} & \pi(A)\,.
}
\]
Consider now a fibration $f\colon A\to B$ and denote by $[f]\colon B^{\op}\to \Cat$ the corresponding pseudo-functor. The composite $\pi[f]\colon B^{\op}\to \Gpd$ gives rise to a fibration in groupoids $\overline{f}\colon\overline{A}\to B$. On the other hand, $\eta[f]$ corresponds to a morphism $q\colon (A,f) \to (\overline{A},\overline{f})$ in $\Fib(B)$:
\[
\xymatrix{
A \ar[dr]_f \ar[rr]^{q} & & \overline{A} \ar[dl]^{\overline{f}} \\
& B.
}
\]
The component $q_b$ of $\eta[f]$ at an object $b$ of $B$ is actually the coinverter $\eta_{A_b}$ of $\mu_{A_b}$:
\[
\xymatrix@C=6ex{
A_b/A_b \ar@/_2ex/[r]^{}="1" \ar@/^2ex/[r]_{}="2" \ar@{=>}"2";"1"^{\mu_{A_b}} & A_b \ar[r]^-{\eta_{A_b}} & \pi(A_b).
} 
\]
It is not difficult to see that the pair $(A_b/A_b,\mu_{A_b})$ coincides with the restriction $(K_b,\kappa_b)$ of the identee $(K,\kappa)$ of $f$ to the fibre over $b$. Hence, as explained in Remark \ref{rem:coinv.pseudofunct}, $q$ turns out to be the coinverter of $\kappa$ in $\Fib(B)$. Thanks to Corollary \ref{cor:isofib.cons=gpd(isofib)}, $\overline{f}$ is conservative and we get the desired factorization of $f$.
\end{proof}

\subsection{From \Cat\ to $\Fib(B)$} \label{sec:pseudo}

We are going to use now the results of the previous section to produce analogous factorizations for (fibrewise) opfibrations in $\Fib(B)$. Our focus on this case is motivated by the study of cohomology theories provided in \cite{CMMV17}. Analogous results are still valid while considering (fibrewise) fibrations in $\Fib(B)$ or (fibrewise) opfibrations in $\OpFib(B)$. It is a well known result, due to B\'enabou, that fibrations in $\Fib(B)$ (opfibrations in $\OpFib(B)$) are just fibrations (opfibrations) in \Cat, so that this case reduces to Propositions \ref{prop:comprehensive.cat} and \ref{prop:isocompr.cat}, thanks to Propositions \ref{prop:coinv} and \ref{prop:coid}.

Let us recall from \cite{CMMV17} the following definitions and results.

\begin{Definition} \label{def:fw} {\rm(see \cite[Definition 2.1]{CMMV17})}
We say that a morphism $p\colon (A,f)\to (C,g)$ in $\Fib(B)$ is a \emph{fibrewise (discrete) opfibration} if, for every object $b$ of $B$, the restriction $p_b\colon A_b\to C_b$ of $p$ to the $b$-fibres is a (discrete) opfibration.
\end{Definition}

From Theorem 2.8 in \cite{CMMV17} it follows that every internal opfibration in $\Fib(B)$ is a fibrewise opfibration, while the latter is exactly a morphism in $\Fib(B)$ which is an internal opfibration in $\Cat/B$ (see Proposition 2.5 and Proposition 2.7 in \cite{CMMV17}). By Corollary 2.9 in [\emph{loc.~cit.}] in the discrete case the two notions coincide. Recall also from [\emph{loc.~cit.}] that Yoneda's \emph{regular spans} and \emph{two-sided fibrations} are instances, respectively, of fibrewise opfibrations and internal opfibrations in $\Fib(B)$. 
\medskip

Let us consider a fibrewise opfibration $p\colon(A,f)\to(C,g)$ in $\Fib(B)$ and focus our attention on its restriction $p_b$ to a single fibre over some $b$ in $B$. By the dual of Proposition \ref{prop:comprehensive.cat}, we can perform the comprehensive factorization of the opfibration $p_b\colon A_b \to C_b$ by means of the coidentifier $q_b$ of its identee:
\[
\xymatrix{
A_b \ar@/_3ex/[rr]_{p_b} \ar[r]^{q_b} & Q_b \ar[r]^{s_b} & C_b.
}
\]
Since $f$ and $g$ are fibrations, the assignments $b\mapsto A_b$ and $b\mapsto C_b$ are pseudo-functorial and the collection of the functors $p_b$ gives rise to a natural transformation of pseudo-functors from $B^{op}$ to \Cat. By the universal property of the coidentifiers $q_b$ for each $b$, the $Q_b$'s are also pseudo-functorial and the $q_b$'s and $s_b$'s organize in two natural transformations. Let us briefly show how this can be proved.

For each $b$ in $B$, we denote by $(K_b,\kappa_b)$ the identee of $p_b$. Let us observe that also the assignment $b\mapsto K_b$ is pseudo-functorial and together with the collection of the $\kappa_b$'s, it determines the identee $(K,\kappa)$ of the cartesian functor $p$. Given an arrow $\beta\colon b'\to b$ in $B$, we always denote by $\beta^*$ its associated change of base functor for any chosen fibration over $B$. Since $q_{b'}\beta^*\kappa_b=q_{b'}\kappa_{b'}\beta^*=1$, by the universal property of the coidentifier $q_b$ there is a unique functor $\beta^*\colon Q_b\to Q_b'$ such that $\beta^*q_b=q_{b'}\beta^*$:
\[
\xymatrix@!=6ex{
K_b \ar@/_2ex/[r]^{}="1" \ar@/^2ex/[r]_{}="2" \ar@{=>}"2";"1"^{\kappa_b} \ar[d]_{\beta^*} & A_b \ar[r]^{q_b} \ar[d]^{\beta^*} & Q_b \ar[r]^{s_b} \ar@{-->}[d]^{\beta^*} & C_b \ar[d]^{\beta^*} \\
K_{b'} \ar@/_2ex/[r]^{}="1" \ar@/^2ex/[r]_{}="2" \ar@{=>}"2";"1"^{\kappa_{b'}} & A_{b'} \ar[r]_{q_{b'}} & Q_{b'} \ar[r]_{s_{b'}} & C_{b'}.
}
\]
Given a composable pair of arrows
\[
\xymatrix{
b'' \ar[r]^{\beta'} & b' \ar[r]^{\beta} & b
}
\]
in $B$, let $\phi_{\beta,\beta'}\colon (\beta')^*\beta^*\to (\beta\beta')^*$ and $\gamma_{\beta,\beta'}\colon (\beta')^*\beta^*\to (\beta\beta')^*$ be the corresponding coherence isomorphisms induced by the fibrations $f$ and $g$ respectively. Since $q_{b''}\phi_{\beta,\beta'}$ is a 2-cell between $q_{b''}(\beta')^*\beta^*=(\beta')^*\beta^*q_b$ and $q_{b''}(\beta\beta')^*=(\beta\beta')^*q_b$, then by the universal property of the coidentifier $q_b$ there exists a unique invertible 2-cell $\psi_{\beta,\beta'}$ such that $\psi_{\beta,\beta'}q_b=q_{b''}\phi_{\beta,\beta'}$.
\[
\xymatrix@C=10ex@R=8ex{
A_b \ar[r]^{q_b} \ar[d]^{\beta^*}_(.8){}="1" \ar@/_6.5ex/[dd]_{(\beta\beta')^*}^(.4){}="2" \ar@{=>}"1";"2"_{\phi_{\beta,\beta'}}^\sim & Q_b \ar[r]^{s_b} \ar[d]_{\beta^*}^(.8){}="3" \ar@/^6.5ex/[dd]^(.25){(\beta\beta')^*}^(.4){}="4" \ar@{=>}"3";"4"^{\psi_{\beta,\beta'}}_\sim & C_b \ar[d]_{\beta^*}^(.8){}="5" \ar@/^6.5ex/[dd]^{(\beta\beta')^*}^(.4){}="6" \ar@{=>}"5";"6"^{\gamma_{\beta,\beta'}}_\sim \\
A_{b'} \ar[r]^{q_{b'}} \ar[d]^{(\beta')^*} & Q_{b'} \ar[r]|(.42)\hole^(.7){s_{b'}} \ar[d]_{(\beta')^*} & C_{b'} \ar[d]_{(\beta')^*} \\
A_{b''} \ar[r]_{q_{b''}} & Q_{b''} \ar[r]_{s_{b''}} & C_{b''}.
}
\]
Moreover, the equality $\gamma_{\beta,\beta'}s_bq_b=s_{b''}q_{b''}\phi_{\beta,\beta'}=s_{b''}\psi_{\beta,\beta'}q_b$ implies that $\gamma_{\beta,\beta'}s_b=s_{b''}\psi_{\beta,\beta'}$ since $q_b$ is right cancellable. Finally, the coherence conditions on the $\psi$'s making the assignment $b\mapsto Q_b$ into a pseudo-functor can be deduced once again by the universal property of the $q_b$'s. If $(K,\kappa)$ is the identee of $p$, since, for a morphism $t\colon(A,f)\to(Y,y)$ in $\Fib(B)$, $t\kappa=1$ if and only if $t_b\kappa_b=1$ for each $b$ in $B$, $q$ is actually the coidentifier of $\kappa$ in $\Fib(B)$. Recalling that, since each $s_b$ is a discrete opfibration, $s$ is an internal discrete opfibration, we have just proved the following result.

\begin{Proposition} \label{prop:comprehensive.int}
Every fibrewise opfibration $p\colon(A,f)\to(C,g)$ in $\Fib(B)$ admits a \emph{comprehensive factorization}
\[
\xymatrix@!=6ex{
A \ar@/^3ex/[rr]^{p} \ar[dr]_{f} \ar[r]_-{q} & Q \ar[d]_(.4){h} \ar[r]_-{s} & C \ar[dl]^{g} \\
& B
}
\]
where $q$ is the coidentifier of the identee of $p$ and $s$ is a discrete opfibration in $\Fib(B)$.
\end{Proposition}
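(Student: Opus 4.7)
The plan is to perform the factorization fibrewise and then assemble everything into $\Fib(B)$ by exploiting the pseudo-functorial interpretation of fibrations. Since $p$ is a fibrewise opfibration, each restriction $p_b\colon A_b\to C_b$ is an opfibration in \Cat, and so by the dual of Proposition \ref{prop:comprehensive.cat} we have its comprehensive factorization $p_b=s_bq_b$, where $q_b$ is the coidentifier of the identee $\kappa_b$ of $p_b$ and $s_b$ is a discrete opfibration. The first step is to observe that $\kappa_b$ is the restriction to the fibre over $b$ of the identee $(K,\kappa)$ of $p$, since identees are computed fibrewise (identees being limits, and $f,g$ being fibrations).

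Next, I would promote $b\mapsto Q_b$ to a pseudo-functor $B^{\op}\to\Cat$. Given $\beta\colon b'\to b$, the universal property of $q_b$ produces a unique $\beta^*\colon Q_b\to Q_{b'}$ with $\beta^*q_b=q_{b'}\beta^*$ (using that $q_{b'}\beta^*\kappa_b=q_{b'}\kappa_{b'}\beta^*=1$), and the coherence isomorphisms $\phi_{\beta,\beta'}$ for $A$ descend uniquely to coherence isomorphisms $\psi_{\beta,\beta'}$ for $Q$ by the 2-dimensional universal property of $q_b$; the pseudo-functor axioms for $Q$ follow by the same right-cancellability of the $q_b$'s. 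The naturality equation $\gamma_{\beta,\beta'}s_b=s_{b''}\psi_{\beta,\beta'}$ (obtained by right-cancelling $q_b$) ensures that $s$ assembles into a morphism in $\Fib(B)$, as does $q$.

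To see that $q$ is the coidentifier of $\kappa$ in $\Fib(B)$, I would use that a morphism $t\colon(A,f)\to(Y,y)$ in $\Fib(B)$ satisfies $t\kappa=1$ if and only if $t_b\kappa_b=1$ for every $b$ (and analogously in dimension 2), so the required factorizations and 2-cells are produced fibrewise and then reassembled by the pseudo-functoriality argument already sketched; uniqueness likewise reduces to uniqueness fibrewise. Finally, since each $s_b$ is a discrete opfibration in \Cat, the morphism $s$ is a fibrewise discrete opfibration in $\Fib(B)$, which by Corollary 2.9 of \cite{CMMV17} is the same as an internal discrete opfibration, giving the desired factorization.

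The main obstacle will be the bookkeeping needed to check that the induced $\beta^*$ on $Q_b$, together with the induced $\psi_{\beta,\beta'}$, genuinely satisfies the pseudo-functor coherence axioms; this is not conceptually difficult, but it requires invoking the 2-dimensional universal property of the fibrewise coidentifiers repeatedly and carefully. Since much of this is routine diagram chasing following the template already laid out in the paragraph preceding the proposition, the proof will essentially consist of verifying these coherences and then observing that the universal property in $\Fib(B)$ is reduced fibrewise.
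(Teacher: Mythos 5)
Your proposal is correct and follows essentially the same route as the paper: factorize each $p_b$ via the dual of Proposition \ref{prop:comprehensive.cat}, assemble $b\mapsto Q_b$ into a pseudo-functor using the (2-dimensional) universal property and right-cancellability of the fibrewise coidentifiers, verify $q$ is the coidentifier of $\kappa$ in $\Fib(B)$ by the fibrewise criterion $t\kappa=1\iff t_b\kappa_b=1$, and conclude that $s$ is an internal discrete opfibration because it is a fibrewise one. No discrepancies worth noting.
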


The same result (with $g$ a split fibration) is obtained in Section 3.3 of \cite{CMMV17}, by providing an explicit construction of the discrete opfibration $s$ together with an ad hoc definition of $q$, which is later on proved to be the coidentifier of the identee of $p$.
\smallskip

As a corollary of Proposition \ref{prop:comprehensive.int}, we get an extension of Proposition \ref{prop:comprehensive.cat} in the case of \Cat.

\begin{Corollary}
For every fibrewise opfibration $p\colon(A,f)\to(C,g)$ in $\Fib(B)$, the factorization of Proposition \ref{prop:comprehensive.int} coincides with the one given by (sequence of coidentifiers, discrete functor) in \Cat.
\end{Corollary}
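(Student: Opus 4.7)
My plan is to verify that the $q$ and $s$ produced by Proposition \ref{prop:comprehensive.int} lie, respectively, in the left and in the right class of the (sequence of coidentifiers, discrete functor) factorization system in \Cat. By uniqueness of factorizations in a factorization system, this will force the two factorizations to coincide. The strategy mirrors that of Proposition \ref{prop:isocompr.cat}, but adapted to the (seq.~of coidentifiers, discrete) system.

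I would first observe that the identee $(K,\kappa)$ of $p$ taken in \Cat\ coincides with the one taken in $\Fib(B)$. Indeed, any $p$-vertical 2-cell $\alpha$ in $A$ satisfies $f\alpha = g(p\alpha) = 1$, so it is automatically $f$-vertical and hence supported on a single fibre $A_b$. Accordingly $(K,\kappa)$ decomposes fibrewise as the pseudo-functorial assembly of the identees $(K_b,\kappa_b)$ of the opfibrations $p_b$. I would then verify that $q\colon A\to Q$ is the coidentifier of $\kappa$ in \Cat, not just in $\Fib(B)$: any functor $t\colon A\to Y$ in \Cat\ with $t\kappa=1$ restricts fibrewise to $t_b\colon A_b\to Y$ with $t_b\kappa_b=1$, and these factor uniquely through $q_b$ (coidentifier in \Cat\ by the dual of Proposition \ref{prop:comprehensive.cat}). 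The resulting $\bar t_b$'s glue, via the same pseudo-functorial construction used for $Q$ itself in the proof of Proposition \ref{prop:comprehensive.int}, into a unique $\bar t\colon Q\to Y$ with $\bar t\, q=t$. So $q$ belongs to the left class.

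The main point is to show that $s\colon Q\to C$ is a discrete functor in \Cat. Fibrewise, each $s_b$ is a discrete opfibration by the dual of Proposition \ref{prop:comprehensive.cat}, so its fibres are discrete and it is right-orthogonal to (sequences of) coidentifiers in \Cat. For the global $s$ the key observation is that any 2-cell $\tau$ in $Q$ with $s\tau=1$ is automatically $h$-vertical (where $h=gs$), so its components land in single fibres $Q_b$, where $s_b\tau=1$ forces $\tau=1$ by discreteness of the fibres. A similar fibrewise reduction, exploiting the pseudo-functorial description of $Q$ from Remark \ref{rem:coinv.pseudofunct}, upgrades this to right-orthogonality of $s$ against arbitrary (sequences of) coidentifiers in \Cat, so $s$ is a discrete functor.

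Combining the two steps and appealing to the uniqueness of factorization in \Cat\ yields the claim. The main obstacle is precisely the last step: propagating the fibrewise discreteness of each $s_b$ to full right-orthogonality of $s$ in \Cat, which requires matching the fibrewise reduction of coidentifiers in $\Fib(B)$ with the factorization system in \Cat\ componentwise.
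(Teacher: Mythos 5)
Your overall plan (show that $q$ is the coidentifier of the identee of $p$ in \Cat\ and that $s$ is a discrete functor, then conclude by uniqueness of factorizations) is the right one, and it is essentially how the corollary is meant to follow from Proposition \ref{prop:comprehensive.int}. However, the step where you claim that $q$ is the coidentifier of $\kappa$ in \Cat\ has a real gap as written. Your fibrewise factorizations $\bar t_b\colon Q_b\to Y$ only define $\bar t$ on the \emph{vertical} morphisms of $Q$; the morphisms of $Q$ lying over non-identity arrows of $B$ (images under $q$ of cartesian lifts in $A$, composed with verticals) still need to be assigned values, and ``the same pseudo-functorial construction used for $Q$ itself'' does not apply here, because $Y$ carries no fibration over $B$ and no reindexing functors: that construction of Section \ref{sec:pseudo} produces cartesian morphisms between fibrations over $B$, not a functor into an arbitrary category. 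Making the gluing honest would require choosing cartesian lifts, setting $\bar t$ on their $q$-images via $t$, and checking independence of choices and functoriality --- i.e.\ reproving a statement the paper already provides. The intended shortcut is Proposition \ref{prop:coid} (applicable since \Cat\ satisfies $(\dagger)$ and $(\dagger')$ by Corollary \ref{cor:cat.dag}): the coidentifier of $\kappa$ computed in \Cat\ is already the coidentifier in $\Fib(B)$, hence coincides up to isomorphism with the fibrewise $q$ of Proposition \ref{prop:comprehensive.int}; this is exactly how Proposition \ref{prop:ultima} invokes Proposition \ref{prop:coinv} in the coinverter case.

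On the other hand, what you single out as ``the main obstacle'' is not an obstacle at all. The right class of this factorization system consists of the functors with discrete fibres, and your first observation already settles it: an $s$-vertical morphism of $Q$ is $gs$-vertical, hence lies in a single fibre $Q_b$, where it is $s_b$-vertical and therefore an identity because $s_b$ is a discrete opfibration. So $s$ is a discrete functor, full stop; no further ``upgrade to right-orthogonality against sequences of coidentifiers'' is needed (and, for the left class, note simply that a coidentifier of an identee is the first step of the iterative construction, which terminates immediately since the comparison $s$ is discrete, or equivalently that any such coidentifier is left-orthogonal to functors with discrete fibres). With these two repairs your argument is correct and matches the intended one.
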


The present approach allows us to obtain an analogous result concerning the factorization given by (coinverter, opfibration in groupoids).

\begin{Proposition} \label{prop:refl.gpd.fib}
Every fibrewise (resp.~internal) opfibration $p\colon(A,f)\to(C,g)$ in $\Fib(B)$ admits a factorization
\[
\xymatrix@!=6ex{
A \ar@/^3ex/[rr]^{p} \ar[dr]_{f} \ar[r]_-{q'} & Q' \ar[d]_(.4){h'} \ar[r]_-{s'} & C \ar[dl]^{g} \\
& B
}
\]
where $q'$ is the coinverter of the identee of $p$ and $s'$ is a fibrewise (resp.~internal) opfibration in groupoids in $\Fib(B)$.
\end{Proposition}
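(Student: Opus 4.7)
The plan is to mimic closely the proof of Proposition \ref{prop:comprehensive.int}, substituting coinverters for coidentifiers throughout, with the dual of Proposition \ref{prop:isocompr.cat} playing the role that the dual of Proposition \ref{prop:comprehensive.cat} played there.

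First I would restrict to each fibre. For every object $b$ of $B$, the restriction $p_b\colon A_b\to C_b$ is an opfibration in \Cat, so by the (opfibration version of) Proposition \ref{prop:isocompr.cat}, it factors as
\[
\xymatrix{A_b \ar[r]^{q'_b} & Q'_b \ar[r]^{s'_b} & C_b}
\]
where $q'_b$ is the coinverter of the identee $\kappa_b$ of $p_b$ and $s'_b$ is an opfibration in groupoids. As in the proof of Proposition \ref{prop:comprehensive.int}, the collection $\{\kappa_b\}$ assembles into the identee $(K,\kappa)$ of the cartesian functor $p$.

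Next I would assemble the fibrewise data into pseudo-functorial structure on $B^\op$, exactly as in the proof of Proposition \ref{prop:comprehensive.int}, but replacing the condition "is an identity" by "is an isomorphism" everywhere. Namely, for each $\beta\colon b'\to b$ in $B$, the composite $q'_{b'}\beta^*\kappa_b=q'_{b'}\kappa_{b'}\beta^*$ is an isomorphism (since $q'_{b'}$ coinverts $\kappa_{b'}$), so the universal property of the coinverter $q'_b$ yields a unique $\beta^*\colon Q'_b\to Q'_{b'}$ with $\beta^* q'_b=q'_{b'}\beta^*$, and similarly a unique $s'_b$ fitting into the bottom row. Coherence isomorphisms $\psi_{\beta,\beta'}$ are induced by the universal property of the coinverters from the coherence isomorphisms of $f$ and $g$, and their coherence conditions follow from the $2$-dimensional universal property together with the fact that coinverters are epimorphic. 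This upgrades $b\mapsto Q'_b$ to a pseudo-functor $B^\op\to\Cat$ classifying a fibration $h'\colon Q'\to B$, and the $q'_b,s'_b$ to morphisms $q',s'$ in $\Fib(B)$. By Remark \ref{rem:coinv.pseudofunct}, $q'$ is then the coinverter of $\kappa$ in $\Fib(B)$. Since each fibre $s'_b$ is an opfibration in groupoids, $s'$ is a fibrewise opfibration in groupoids in $\Fib(B)$, which settles the fibrewise case.

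For the internal case, I would invoke Proposition \ref{prop:dag.to.fib}, which shows that each object of $\Fib(B)$ satisfies $(\dagger')$. Thus, if $p$ is an internal opfibration in $\Fib(B)$, the dual of Proposition \ref{prop:coinv} applied in the $2$-category $\Fib(B)$ guarantees that the comparison $s'$ is still an internal opfibration in $\Fib(B)$. Combined with the fibrewise result just established, $s'$ is both an internal opfibration and a fibrewise opfibration whose fibres are groupoids; this is precisely what it means to be an internal opfibration in groupoids in $\Fib(B)$ (and it is consistent with Corollary \ref{cor:isofib.cons=gpd(isofib)}, which identifies this with conservativeness for isofibrations).

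The main obstacle I anticipate is the bookkeeping of the pseudo-functorial coherence data in $\Fib(B)$; however, this is entirely parallel to the proof of Proposition \ref{prop:comprehensive.int}, once one observes that the universal property of a coinverter, like that of a coidentifier, is strong enough both to induce the required morphisms and $2$-cells and to enforce their equations by right cancellability. The only genuinely new ingredient with respect to Proposition \ref{prop:comprehensive.int} is the appeal to Proposition \ref{prop:dag.to.fib} at the end, which transfers the internality of the comparison from the base $2$-category \Cat\ to $\Fib(B)$.
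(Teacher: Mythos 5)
Your proposal is correct and follows essentially the same route as the paper: the fibrewise construction of $q'$ and $s'$ via coinverters (parallel to the coidentifier construction preceding Proposition \ref{prop:comprehensive.int}), Proposition \ref{prop:isocompr.cat} applied fibre by fibre to get that $s'$ is a fibrewise opfibration in groupoids, and then, in the internal case, Proposition \ref{prop:dag.to.fib} combined with (the dual of) Corollary \ref{cor:fib.to.frac} applied in $\Fib(B)$ to conclude that $s'$ is an internal opfibration.
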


\begin{proof}
Let us start with a fibrewise opfibration $p$. The coinverter $q'$ of the identee of $p$ in $\Fib(B)$ and the comparison arrow $s'$ can be constructed following the lines of the previous paragraph, by means of coinverters taken fibrewise. This means that for each $b$ in $B$, $(q'_b,s'_b)$ gives the factorization of $p_b$ into a coinverter followed by an opfibration in groupoids, thanks to Proposition \ref{prop:isocompr.cat}. Hence $s'$ is a fibrewise opfibration in groupoids.

If moreover $p$ is an internal opfibration, then $s'$ is also an internal opfibration by Corollary \ref{cor:fib.to.frac} applied to $\Fib(B)$, thanks to Proposition \ref{prop:dag.to.fib}.
\end{proof}

As a consequence, we get an extension of Proposition \ref{prop:isocompr.cat}.

\begin{Proposition} \label{prop:ultima}
For every fibrewise opfibration $p\colon(A,f)\to(C,g)$ in $\Fib(B)$, the factorization of Proposition \ref{prop:refl.gpd.fib} coincides with the one given by (sequence of coinverters, conservative functor) in \Cat.
\end{Proposition}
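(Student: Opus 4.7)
The plan is to identify the factorization $p=s'q'$ produced by Proposition \ref{prop:refl.gpd.fib} as the (sequence of coinverters, conservative functor) factorization of $p$ in \Cat; by the uniqueness of the latter (Theorem C.0.31 in \cite{Joyal}), it will suffice to verify that $s'$ is conservative and that $q'$ is orthogonal to conservative functors in \Cat.

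For the first point, I would show that $s'$, viewed as a plain functor in \Cat, is conservative. Let $\alpha\colon q_0\to q_1$ be a morphism in $Q'$ such that $s'(\alpha)$ is invertible. The relation $h'=gs'$ forces $h'(\alpha)$ invertible in $B$; since $h'$ is a fibration and therefore an isofibration, Lemma \ref{lemma:psvert=vert} yields a decomposition $\alpha=\sigma\cdot\tau$ with $\sigma$ an isomorphism and $\tau$ an $h'$-vertical morphism, which thus lies in a single fibre $Q'_b$. The morphism $s'(\tau)$ is $g$-vertical because $s'$ is a morphism in $\Fib(B)$, and invertible because both $s'(\alpha)$ and $s'(\sigma)$ are; it therefore belongs to the fibre $C_b$. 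Now $s'_b$ is an opfibration in groupoids, hence conservative by Corollary \ref{cor:isofib.cons=gpd(isofib)}, so $\tau$ is invertible and consequently so is $\alpha$.

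For the second point, I would show that $q'$ is orthogonal to conservative functors in \Cat. Given a commutative square $\psi u=v q'$ with $\psi\colon X\to Y$ conservative, the equality $\psi u\kappa=v q'\kappa$ together with $q'\kappa$ invertible and $\psi$ conservative imply that $u\kappa$ is invertible. To build a diagonal filler $\delta\colon Q'\to X$ satisfying $\delta q'=u$ and $\psi\delta=v$, I would use the pseudo-functorial construction of $q'$ from Remark \ref{rem:coinv.pseudofunct}: fibrewise, each $p_b$ is an opfibration and each $q'_b$ is the coinverter part of the (sequence of coinverters, conservative) factorization of $p_b$ in \Cat\ by Proposition \ref{prop:isocompr.cat}, so the restriction $u_b\colon A_b\to X$ factors uniquely through $q'_b$ as $u_b=\delta_b q'_b$. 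The identity $\psi\delta=v$ would then follow by right-cancellation of $q'$ via its two-dimensional universal property, and the composition with a mediating functor from the $(L,R)$-factorization provides the isomorphism identifying the two factorizations.

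The main obstacle is the assembly step: the universal property of $q'$ is a priori given only in $\Fib(B)$, so one must carefully verify that the fibrewise factorizations $u_b=\delta_b q'_b$ glue coherently along the transition isomorphisms $\psi_{\beta,\beta'}$ described in Section \ref{sec:pseudo}, defining a single functor $\delta$ on all of $Q'$ (not merely on each fibre) and compatible with the action of $u$ on the cartesian morphisms in $A$. Once this coherence is established, the two factorizations agree up to the canonical comparison, completing the proof.
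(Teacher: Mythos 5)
Your first half (conservativity of $s'$) is fine and is essentially the paper's own argument in element-wise form: factor a morphism inverted by $s'$ through a cartesian isomorphism and an $h'$-vertical part via Lemma \ref{lemma:psvert=vert}, and use that $s'_b$ is an opfibration in groupoids, hence conservative, by Corollary \ref{cor:isofib.cons=gpd(isofib)}. The genuine gap is in the second half. You reduce the orthogonality of $q'$ against conservative functors to an ``assembly step'' --- gluing the fibrewise factorizations $u_b=\delta_b q'_b$ along the transition isomorphisms into a single functor $\delta\colon Q'\to X$ --- and you explicitly leave that coherence verification open, so the diagonal filler, which is the heart of the claim, is never actually constructed. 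Moreover, the premise that forces you into this gluing, namely that ``the universal property of $q'$ is a priori given only in $\Fib(B)$'', is mistaken: by Corollary \ref{cor:cat.dag} every object of \Cat\ satisfies $(\dagger)$, so Proposition \ref{prop:coinv} shows that $q'$ is the coinverter of the identee $\kappa$ of $p$ already in \Cat\ (this is the opening line of the paper's proof). With that in hand your square $\psi u=vq'$ is handled in one stroke: $u\kappa$ is invertible, hence $u=\delta q'$ for a unique $\delta$ by the \Cat-level universal property, and $\psi\delta=v$ together with uniqueness of the diagonal follow because coinverters are epimorphisms (with the 2-dimensional aspect, if needed, coming from the 2-dimensional universal property of the coinverter).

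Note also that the paper does not argue via orthogonality and uniqueness of factorizations at all: it shows directly, using Lemma \ref{lemma:psvert=vert} (applied to $f$) and Corollary \ref{cor:isofib.cons=gpd(isofib)} (applied to $p$ as an opfibration in $\Cat/B$), that $q'$ is in fact the coinverter in \Cat\ of the \emph{invertee} of $p$, i.e.\ that $(q',s')$ is literally the first step of the iterated invertee--coinverter construction, which then terminates because $s'$ is conservative. If you repair the diagonal as indicated, your route via left-orthogonality to conservative functors plus uniqueness of factorizations is a viable alternative, but you should then also check that the orthogonality you verify is the one underlying the factorization system of \cite[Theorem C.0.31]{Joyal}.
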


\begin{proof}
By Proposition \ref{prop:coinv}, the arrow $q'$ in the above factorization can be obtained as the coinverter of the identee $\kappa$ of $p$ in \Cat\ (which is also the identee in $\Cat/B$). Actually, $q'$ is also the coinverter of the invertee $\omega$ of $p$ in \Cat. Indeed, since $f\omega$ is an isomorphism, thanks to Lemma \ref{lemma:psvert=vert}, it factorizes as $\sigma\cdot\tau$, with $\sigma$ an isomorphism and $\tau$ an $f$-vertical 2-cell. Then $p\tau=p(\sigma^{-1}\cdot\omega)$ is an isomorphism, hence $\tau$ factorizes through the invertee $\omega'$ of $p$ in $\Cat/B$.
Since $p$ is an opfibration in $\Cat/B$, by Corollary \ref{cor:isofib.cons=gpd(isofib)}, the coinverter $q'$ of $\kappa$ in $\Cat/B$ is also the coinverter of $\omega'$ in $\Cat/B$, hence $q'\tau$ is an isomorphism. As a consequence $q'\omega$ is an isomorphism, and the universal property in \Cat~follows easily.

Using the same technique as before, considering now the invertee $\omega$ of $s'$ in \Cat, we get that $\omega=\sigma\cdot\tau$ with $\sigma$ an isomorphism and $\tau$ an $h'$-vertical 2-cell. As above, $\tau$ factorizes through the invertee $\omega'$ of $s'$ in $\Cat/B$, which is an isomorphism by Corollary \ref{cor:isofib.cons=gpd(isofib)}, because $s'$ is an opfibration in groupoids in $\Cat/B$. In conclusion, the factorization of $p$ in \Cat\ through a conservative functor is obtained by means of just one coinverter $q'$.
\end{proof}


\appendix

\section{Proof of the Chevalley criterion}

Here we provide a detailed proof of the Chevalley criterion for internal fibrations in a finitely complete 2-category \cC. We assume 1.\ of Proposition \ref{prop:fib} as a definition of internal fibration in \cC\ (that we make explicit in the next lines), and we prove the equivalence between 1.\ and 4.

It is not hard to see, following \cite{Weber}, that an equivalent formulation of 1.\ is given by the following. For all $\beta\colon b \to fa$ there exists an $f$-cartesian 2-cell over $\beta$, i.e.\ $\alpha\colon a'\to a$ such that
\begin{enumerate}
\item $f\alpha=\beta$;
\item for all $g\colon Y \to X$, all
\[
\xymatrix@!=7ex{
Y \ar@/^2ex/[r]^{y}_{}="1" \ar@/_2ex/[r]_{ag}^{}="2" \ar@{=>}"1";"2"^{\sigma} & A
}
\]
and all $\gamma\colon fy\to bg$ such that $f\sigma=\beta g\cdot\gamma$
\[
\xymatrix@!=7ex{
Y \ar[r]^g \ar@/_1ex/[drr]_{fy}^{}="3" & X \ar[r]^a \ar[dr]_b^{}="1"_(.2){}="4" \ar@{=>}"3";"4"^{\gamma} & A \ar[d]^f_(.1){}="2" \ar@{=>}"1";"2"^(.4){\beta} \\
& & B
}
\]
there exists a unique $\tau\colon y \to a'g$ such that
\begin{itemize}
\item $\alpha g\cdot\tau=\sigma$
\[
\xymatrix@!=7ex{
Y \ar@/^5ex/[rr]^y_(.4){}="1" \ar[r]_{g}^(.8){}="2" \ar@{=>}"1";"2"_{\tau} & X \ar[r]^{a'}_{}="3" \ar@/_4ex/[r]_{a}^{}="4" \ar@{=>}"3";"4"_{\alpha} & A
}
\]
\item $f\tau=\gamma$.
\end{itemize}
\end{enumerate}

\begin{Proposition}[Chevalley criterion]
A 1-cell $f$ in \cC\ is a fibration if and only if the 1-cell $\overline{f}$ below
\[
\xymatrix{
A/A \ar@/_2ex/[ddr]_{fd_0} \ar@/^2ex/[drr]^{d_1} \ar[dr]^(.6){\overline{f}} \\
& B/f \ar[d]_{Rf} \ar[r]^-{d_1} & A \ar[d]^{f} \\
& B \ar[r]_{1_B} \ar@{}[ur]|(.3){}="1" \ar@{}[ur]|(.7){}="2" \ar@{=>}"1";"2"_{\varphi} & B
}
\]
uniquely determined by the equations
\[
\left\{
\begin{array}{l}
(Rf)\overline{f}=fd_0 \\
d_1\overline{f}=d_1 \\
\varphi\overline{f}=f\mu_A \\
\end{array}
\right.
\]
has a right adjoint with counit an identity.
\end{Proposition}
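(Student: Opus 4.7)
\bigskip

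\noindent\textbf{Proof proposal.} The plan is to prove both directions by careful transport of universal properties between $A/A$, $B/f$ and the explicit definition of $f$-cartesian 2-cell made explicit in the appendix. Throughout, I will use two key facts: 1-cells into $A/A$ (resp.\ $B/f$) are in bijection with compatible data $(d_0,d_1,\mu_A)$ (resp.\ $(Rf,d_1,\varphi)$), and analogously for 2-cells; and the assumption "counit is an identity" means $\overline{f}\,r=1_{B/f}$ together with a unit $\eta\colon 1_{A/A}\to r\overline{f}$ whose triangle identities reduce to $\overline{f}\eta=1$ and $\eta r=1$.

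For the direction $(\Rightarrow)$, I would apply the hypothesis that $f$ is a fibration to the generalized element $(d_1,Rf,\varphi)\colon B/f\to A\times B\times A^{\mathbf 2}$ to obtain an $f$-cartesian 2-cell $\alpha\colon a'\to d_1\colon B/f\to A$ with $f\alpha=\varphi$. By the 1-dimensional universal property of $A/A$, this determines a unique $r\colon B/f\to A/A$ with $d_0r=a'$, $d_1r=d_1$ and $\mu_Ar=\alpha$. Verifying $\overline{f}r=1_{B/f}$ is then a direct check against the three defining equations of $\overline{f}$: $(Rf)\overline{f}r=fa'=Rf$ (the domain of $\varphi$), $d_1\overline{f}r=d_1$, $\varphi\overline{f}r=f\alpha=\varphi$. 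To build $\eta$, I would use the cartesianness of $\alpha\overline{f}$ over $f\mu_A=\varphi\overline{f}$ (cartesianness being stable under whiskering on the right): the 2-cell $\mu_A\colon d_0\to d_1$ has $f$-image $\varphi\overline{f}$, so it factors uniquely as $\alpha\overline{f}\cdot\eta_0$ with $\eta_0\colon d_0\to a'\overline{f}$ an $f$-vertical 2-cell. The pair $(\eta_0,1_{d_1})$ is compatible by construction and thus defines $\eta\colon 1_{A/A}\to r\overline{f}$ in $A/A$. Both triangle identities then reduce via the respective universal properties: $\overline{f}\eta=1$ because its projections are $f\eta_0=1$ and $1_{d_1}$; and $\eta r=1$ because $\eta_0r$ is an $f$-vertical self-factorization of $\alpha$ through $\alpha\overline{f}r=\alpha$, hence the identity by uniqueness in cartesianness.

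For the direction $(\Leftarrow)$, given $\beta\colon b\to fa\colon X\to B$, I would factor the data through $B/f$ as $\langle b,a,\beta\rangle\colon X\to B/f$ and then set $a':=d_0 r\langle b,a,\beta\rangle$ and $\alpha:=\mu_A r\langle b,a,\beta\rangle$. The identity $\overline{f}r=1$ immediately gives $f\alpha=\varphi\overline{f}r\langle b,a,\beta\rangle=\beta$ and that $a',a\colon X\to A$ have the right codomain. To verify cartesianness, given a test datum $g\colon Y\to X$, $y$, $\sigma\colon y\to ag$ and $\gamma\colon fy\to bg$ with $f\sigma=\beta g\cdot\gamma$, I would assemble $(y,ag,\sigma)$ into $w\colon Y\to A/A$ and observe that $(\gamma,1_{ag})$ defines a 2-cell $\xi\colon\overline{f}w\to\langle b,a,\beta\rangle g$ in $B/f$ (the compatibility condition being exactly $f\sigma=\beta g\cdot\gamma$). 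The 2-dimensional part of the adjunction $\overline{f}\dashv r$ yields a unique $\tilde\xi=r\xi\cdot\eta w\colon w\to r\langle b,a,\beta\rangle g$. I would then set $\tau:=d_0\tilde\xi$ and check: $d_1\tilde\xi=1_{ag}$ (from $d_1\eta=1$ and $d_1\xi=1$), so the $A/A$-compatibility gives $\alpha g\cdot\tau=\sigma$; and $\overline{f}\tilde\xi=\xi$ (triangle identity with counit the identity), whose $Rf$-component reads $f\tau=\gamma$. Uniqueness of $\tau$ is inherited from the bijectivity of the adjunction correspondence, since any competitor $\tau'$ pairs with $1_{ag}$ to give a 2-cell whose $\overline{f}$-image is forced to be $\xi$.

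The bookkeeping of universal properties is the bulk of the work; the principal obstacle is conceptual rather than technical, namely identifying precisely the right 2-cell $(\gamma,1_{ag})$ in $B/f$ and the right $f$-vertical factorization in $A$ that make the adjunction correspondence transport the cartesian universal property across $\overline{f}$. Once this identification is fixed, the rest is an application of the 2-dimensional Yoneda-style universal properties of the comma objects $A/A$ and $B/f$.
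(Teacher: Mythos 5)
Your proposal is correct and follows the paper's appendix proof essentially step for step: the same cartesian lift of $\varphi$ producing the candidate right adjoint $r$, the identity counit read off the universal property of $B/f$, the unit obtained from the unique $f$-vertical factorization of $\mu_A$ through $\alpha\overline{f}$, and in the converse direction the same transpose $\tau=d_0(r\theta\cdot\eta\widehat{\sigma})$ with uniqueness settled by the adjunction bijection on 2-cells together with the 2-dimensional universal property of $B/f$. The only differences are cosmetic: you package the interchange computation for $\alpha g\cdot\tau=\sigma$ as the compatibility condition for 2-cells into $A/A$, and you run the uniqueness argument over $\widehat{\sigma}$ with the pair $(\tau',1_{ag})$ where the paper uses $\widehat{1_y}$ with $(\tau',\sigma)$.
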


\begin{proof}
``only if'' part.

Suppose $f$ is a fibration, then the 2-cell $\varphi\colon Rf\to fd_1$ admits a cartesian lifting, i.e.\ an $f$-cartesian 2-cell $\alpha\colon a\to d_1$ such that $f\alpha=\varphi$. This $\alpha$ induces an arrow $r$ (our candidate right adjoint) as in the diagram
\[
\xymatrix{
B/f \ar@/_2ex/[ddr]_{a} \ar@/^2ex/[drr]^{d_1} \ar[dr]^(.6){r} \\
& A/A \ar[d]_{d_0} \ar[r]^-{d_1} & A \ar[d]^{1_A} \\
& A \ar[r]_{1_A} \ar@{}[ur]|(.3){}="1" \ar@{}[ur]|(.7){}="2" \ar@{=>}"1";"2"_{\mu_A} & A,
}
\]
uniquely determined by the equations
\[
\left\{
\begin{array}{l}
d_0r=a \\
d_1r=d_1 \\
\mu_Ar=\alpha. \\
\end{array}
\right.
\]
From the equality $\varphi\overline{f}r=f\mu_Ar=f\alpha=\varphi$, by the universal property of the comma object $B/f$, $\overline{f}r=1_{B/f}$. So we can choose $\epsilon=1_{1_{B/f}}$ as a counit of the desired adjunction $\overline{f}\dashv r$.

Let us now construct the corresponding unit. The 2-cell $\mu_A$ is such that $f\mu_A=\varphi\overline{f}=f\alpha\overline{f}$, so by cartesianness of $\alpha$ there exists a unique $\tau\colon d_0\to a\overline{f}$ such that
\[
\left\{
\begin{array}{l}
\alpha\overline{f}\cdot\tau=\mu_A \\
f\tau=1_{(Rf)\overline{f}} \\
\end{array}
\right.
\]
\[
\xymatrix@!=7ex{
A/A \ar@/^5ex/[rr]^{d_0}_(.4){}="1" \ar[r]_{\overline{f}}^(.8){}="2" \ar@{=>}"1";"2"_{\tau} & B/f \ar[r]^{a}_{}="3" \ar@/_4ex/[r]_{d_1}^{}="4" \ar@{=>}"3";"4"_{\alpha} & A.
}
\]
Let us now consider the following diagram
\[
\xymatrix{
A/A \ar@/_6ex/[ddr]_{d_0}^(.7){}="3" \ar@/^6ex/[drr]^{d_1r\overline{f}=d_1}_(.7){}="2" \ar@/_2ex/[dr]_{r\overline{f}}_(.8){}="4" \ar@/^2ex/[dr]^{1_{A/A}}^(.8){}="1" \ar@{=>}"1";"2"_{1_{d_1}} \ar@{=>}"3";"4"^{\tau} \\
& A/A \ar[d]_{d_0} \ar[r]^-{d_1} & A \ar[d]^{1_A} \\
& A \ar[r]_{1_A}  \ar@{}[ur]|(.3){}="1" \ar@{}[ur]|(.7){}="2" \ar@{=>}"1";"2"_{\mu_A} & A
}
\]
Since $\mu_Ar\overline{f}\cdot\tau=\alpha\overline{f}\cdot\tau=\mu_A$, by the 2-dimensional universal property of $A/A$, there exists a unique 2-cell $\eta\colon 1_{A/A}\to r\overline{f}$ such that
\[
\left\{
\begin{array}{l}
d_0\eta=\tau \\
d_1\eta=1_{d_1}. \\
\end{array}
\right.
\]
It remains to prove that $(\overline{f},r,\eta,\epsilon)$ form an adjunction in \cC, i.e.\ they satisfy the triangle identities. But $r\epsilon$ and $\epsilon\overline{f}$ being identities, since $\epsilon$ is, we only have to prove that $\eta r$ and $\overline{f}\eta$ are identities too. Consider first the diagram
\[
\xymatrix{
A/A \ar@/_3ex/[dr]_{\overline{f}}^{}="1" \ar@/^3ex/[dr]^{\overline{f}r\overline{f}=\overline{f}}_{}="2" \ar@{=>}"1";"2"^{\overline{f}\eta} \\
& B/f \ar[d]_{Rf} \ar[r]^-{d_1} & A \ar[d]^{f} \\
& A \ar[r]_{1_A}  \ar@{}[ur]|(.3){}="1" \ar@{}[ur]|(.7){}="2" \ar@{=>}"1";"2"_{\varphi} & B.
}
\]
We have that
\[
\left\{
\begin{array}{l}
(Rf)\overline{f}\eta=fd_0\eta=f\tau=1_{(Rf)\overline{f}} \\
d_1\overline{f}\eta=d_1\eta=1_{d_1} \\
\end{array}
\right.
\]
Hence by universal property $\overline{f}\eta=1_{\overline{f}}$. A similar argument applied to the diagram below will give us that $\eta r=1_r$.
\[
\xymatrix{
B/f \ar@/_3ex/[dr]_{r}^{}="1" \ar@/^3ex/[dr]^{r\overline{f}r=r}_{}="2" \ar@{=>}"1";"2"^{\eta r} \\
& A/A \ar[d]_{d_0} \ar[r]^-{d_1} & A \ar[d]^{1_A} \\
& A \ar[r]_{1_A}  \ar@{}[ur]|(.3){}="1" \ar@{}[ur]|(.7){}="2" \ar@{=>}"1";"2"_{\mu_A} & A.
}
\]
We have that
\[
\left\{
\begin{array}{l}
d_0\eta r=\tau r \\
d_1\eta r=1_{d_1}. \\
\end{array}
\right.
\]
Hence by universal property $\eta r=1_r$ if and only if $\tau r=1_{a}$. We are going to prove the latter by cartesianness of $\alpha$. In fact, $\tau r$ is such that
\[
\left\{
\begin{array}{l}
\alpha\cdot\tau r=\alpha\overline{f}r\cdot\tau r=\mu_A r=\alpha \\
f\tau r=1_{Rf} \\
\end{array}
\right.
\]
But the same properties are shared by $1_a$, so by uniqueness $\tau r=1_{a}$.

``if'' part.

Suppose now that $r$ is the right adjoint to $\overline{f}$ in \cC\ and that the counit of the adjunction is $1_{1_{B/f}}$, so that $\overline{f}r=1_{B/f}$. We have to show that any 2-cell
\[
\xymatrix@!=7ex{
X \ar[r]^a \ar[dr]_b^{}="1" & A \ar[d]^f_(.1){}="2" \ar@{=>}"1";"2"^(.4){\beta} \\
& B
}
\]
admits an $f$-cartesian lifting. By the universal property of $B/f$, $\beta$ induces a unique morphism $\widehat{\beta}$
\[
\xymatrix{
X \ar@/_2ex/[ddr]_{b} \ar@/^2ex/[drr]^{a} \ar[dr]^(.6){\widehat{\beta}} \\
& B/f \ar[d]_{Rf} \ar[r]^-{d_1} & A \ar[d]^{f} \\
& B \ar[r]_{1_B} \ar@{}[ur]|(.3){}="1" \ar@{}[ur]|(.7){}="2" \ar@{=>}"1";"2"_{\varphi} & B
}
\]
satisfying the equations
\[
\left\{
\begin{array}{l}
(Rf)\widehat{\beta}=b \\
d_1\widehat{\beta}=a \\
\varphi\widehat{\beta}=\beta. \\
\end{array}
\right.
\]
Let $\alpha$ be equal to $\mu_Ar\widehat{\beta}\colon d_0r\widehat{\beta}\to d_1r\widehat{\beta}=a$. For brevity, we call $a'=d_0r\widehat{\beta}$, the domain of $\alpha$. We are going to show that $\alpha$ is the desired $f$-cartesian lifting of $\beta$. First of all, $f\alpha=f\mu_Ar\widehat{\beta}=\varphi\overline{f}r\widehat{\beta}=\varphi\widehat{\beta}=\beta$. It remains to show that it is cartesian. Consider a morphism $g\colon Y \to X$ and 2-cells $\sigma\colon y \to ag$ and $\gamma\colon fy\to bg$ such that $f\sigma=\beta g\cdot \gamma$.
\[
\xymatrix@!=7ex{
Y \ar[r]^g \ar@/_1ex/[drr]_{fy}^{}="3" & X \ar[r]^a \ar[dr]_b^{}="1"_(.2){}="4" \ar@{=>}"3";"4"^{\gamma} & A \ar[d]^f_(.1){}="2" \ar@{=>}"1";"2"^(.4){\beta} \\
& & B
}
\]
The 2-cell $\sigma$ induces a unique morphism $\widehat{\sigma}$
\[
\xymatrix{
Y \ar@/_2ex/[ddr]_{y} \ar@/^2ex/[drr]^{ag} \ar[dr]^(.6){\widehat{\sigma}} \\
& A/A \ar[d]_{d_0} \ar[r]^-{d_1} & A \ar[d]^{1_A} \\
& A \ar[r]_{1_A} \ar@{}[ur]|(.3){}="1" \ar@{}[ur]|(.7){}="2" \ar@{=>}"1";"2"_{\mu_A} & A
}
\]
satisfying the equations
\[
\left\{
\begin{array}{l}
d_0\widehat{\sigma}=y \\
d_1\widehat{\sigma}=ag \\
\mu_A\widehat{\sigma}=\sigma. \\
\end{array}
\right.
\]
Consider the diagram
\[
\xymatrix{
Y \ar@/_6ex/[ddr]_{fy=(Rf)\overline{f}\widehat{\sigma}}^(.7){}="3" \ar@/^6ex/[drr]^{ag=d_1\widehat{\beta}g}_(.7){}="2" \ar@/_2ex/[dr]_{\widehat{\beta}g}_(.8){}="4" \ar@/^2ex/[dr]^{\overline{f}\widehat{\sigma}}^(.8){}="1" \ar@{=>}"1";"2"_{1_{ag}} \ar@{=>}"3";"4"^{\gamma} \\
& B/f \ar[d]_{Rf} \ar[r]^-{d_1} & A \ar[d]^{f} \\
& B \ar[r]_{1_B} \ar@{}[ur]|(.3){}="1" \ar@{}[ur]|(.7){}="2" \ar@{=>}"1";"2"_{\varphi} & B.
}
\]
Since
\[
\left\{
\begin{array}{l}
f1_{ag}\cdot\varphi\overline{f}\widehat{\sigma}=\varphi\overline{f}\widehat{\sigma}=f\mu_A\widehat{\sigma}=f\sigma=\beta g\cdot\gamma \\
\varphi\widehat{\beta}g\cdot\gamma=\beta g\cdot\gamma \\
\end{array}
\right.
\]
by the 2-dimensional universal property of $B/f$, there exists a unique 2-cell $\theta\colon \overline{f}\widehat{\sigma}\to \widehat{\beta}g$ such that
\[
\left\{
\begin{array}{l}
(Rf)\theta=\gamma \\
d_1\theta=1_{ag}. \\
\end{array}
\right.
\]

Considering the unit $\eta\colon 1_{A/A}\to r\overline{f}$ of the adjunction $\overline{f}\dashv r$, let $\tau$ be equal to $d_0(r\theta\cdot\eta\widehat{\sigma})\colon y \to a'g$. We want to show that $\tau$ is the unique 2-cell satisfying the equations
\begin{equation} \label{tau}
\left\{
\begin{array}{l}
\alpha g\cdot \tau=\sigma \\
f\tau=\gamma. \\
\end{array}
\right.
\end{equation}
Let us start with the second equation:
\[
f\tau=fd_0(r\theta\cdot\eta\widehat{\sigma})=(Rf)\overline{f}(r\theta\cdot\eta\widehat{\sigma})=(Rf)(\theta\cdot\overline{f}\eta\widehat{\sigma})=(Rf)(\theta\cdot 1_{\overline{f}\widehat{\sigma}})=\gamma,
\]
where the last but one equality follows from the fact that $\overline{f}\eta$ is an identity since the counit is, by the triangle equalities. To prove the first equation we will use interchange law following from horizontal composition of 2-cells in the following diagram:
\[
\xymatrix@!=7ex{
Y \ar[r]^-{\widehat{\sigma}}_(.8){}="5" \ar@/_5ex/[rr]_-{\widehat{\beta}g}_(.4){}="6" \ar@{=>}"5";"6"_{\theta} & A/A \ar@/^5ex/[rr]^-{1_{A/A}}_(.4){}="1" \ar[r]_-{\overline{f}}^(.8){}="2" \ar@{=>}"1";"2"_{\eta} & B/f \ar[r]_-{r} & A/A \ar[r]^-{d_0}_{}="3" \ar@/_4ex/[r]_-{d_1}^{}="4" \ar@{=>}"3";"4"_{\mu_A} & A
}
\]
\begin{align*}
\alpha g\cdot\tau & =\alpha g\cdot d_0(r\theta\cdot\eta\widehat{\sigma})=\mu_A r\widehat{\beta}g\cdot d_0r\theta\cdot d_0\eta\widehat{\sigma} \\
& = d_1r\theta\cdot\mu_A r\overline{f}\widehat{\sigma}\cdot d_0\eta\widehat{\sigma}=d_1\theta\cdot(\mu_Ar\overline{f}\cdot d_0\eta)\widehat{\sigma} \\
& = 1_{ag}\cdot(d_1\eta\cdot\mu_A)\widehat{\sigma}=d_1\overline{f}\eta\widehat{\sigma
}\cdot\mu_A\widehat{\sigma}=1_{ag}\cdot\sigma=\sigma\,.
\end{align*}

It just remains to prove that $\tau$ is unique. Let us consider the following diagram, where $\widehat{1_y}$ is the 1-cell induced from $1_y$ by the universal property of $A/A$:
\[
\xymatrix{
Y \ar@/_6ex/[ddr]_{y}^(.7){}="3" \ar@/^6ex/[drr]^{ag}_(.7){}="2" \ar@/_2ex/[dr]_{r\widehat{\beta}g}_(.8){}="4" \ar@/^2ex/[dr]^{\widehat{1_{y}}}^(.8){}="1" \ar@{=>}"1";"2"_{\sigma} \ar@{=>}"3";"4"^{\tau} \\
& A/A \ar[d]_{d_0} \ar[r]^-{d_1} & A \ar[d]^{1_A} \\
& A \ar[r]_{1_A} \ar@{}[ur]|(.3){}="1" \ar@{}[ur]|(.7){}="2" \ar@{=>}"1";"2"_{\mu_A} & A.
}
\]
Since
\[
\left\{
\begin{array}{l}
\sigma\cdot\mu_A\widehat{1_y}=\sigma\cdot 1_y=\sigma \\
\mu_Ar\widehat{\beta}g\cdot\tau=\alpha g\cdot\tau=\sigma, \\
\end{array}
\right.
\]
by the 2-dimensional universal property of $A/A$, there exists a unique 2-cell $\xi\colon \widehat{1_y}\to r\widehat{\beta}g$ such that
\[
\left\{
\begin{array}{l}
d_0\xi=\tau \\
d_1\xi=\sigma. \\
\end{array}
\right.
\]
For any other 2-cell $\tau'\colon y\to a'g$ satisfying (\ref{tau}), one can define $\xi'$ as for $\xi$ above. Now $\tau'=\tau$ will follow from $\xi'=\xi$. We actually show the equality of their corresponding 2-cells in the adjunction, which are the two composites
\[
\xymatrix{
\overline{f}\widehat{1_y} \ar@<.7ex>[r]^-{\overline{f}\xi} \ar@<-.7ex>[r]_-{\overline{f}\xi'} & \overline{f}r\widehat{\beta}g \ar[r]^{\epsilon\widehat{\beta}g} & \widehat{\beta}g.
}
\]
Now recalling that the counit $\epsilon$ is an identity, it suffices to prove that $\overline{f}\xi'=\overline{f}\xi$. But the latter follows by uniqueness since both fill the left upper part of the diagram below and have the same projections through $Rf$ and $d_1$.
\[
\xymatrix{
Y \ar@/_3ex/[dr]_{\overline{f}\widehat{1_y}}^{}="1" \ar@/^3ex/[dr]^{\widehat{\beta}g}_{}="2" \ar@{=>}"1";"2"^{\overline{f}\xi} \\
& B/f \ar[d]_{Rf} \ar[r]^-{d_1} & A \ar[d]^{f} \\
& A \ar[r]_{1_A} \ar@{}[ur]|(.3){}="1" \ar@{}[ur]|(.7){}="2" \ar@{=>}"1";"2"_{\varphi} & B
}
\]
\[
\left\{
\begin{array}{l}
(Rf)\overline{f}\xi=fd_0\xi=f\tau=\gamma=f\tau'=fd_0\xi'=(Rf)\overline{f}\xi' \\
d_1\overline{f}\xi=d_1\xi=\sigma=d_1\xi'=d_1\overline{f}\xi' \\
\end{array}
\right.
\]

\end{proof}

The following is the analogous of Chevalley criterion for opfibrations.

\begin{Proposition}
A 1-cell $f$ in \cC\ is an opfibration if and only if the 1-cell $\overline{f}$ below
\[
\xymatrix{
A/A \ar@/_2ex/[ddr]_{d_0} \ar@/^2ex/[drr]^{fd_1} \ar[dr]^(.6){\overline{f}} \\
& f/B \ar[d]_{p_0} \ar[r]^-{Lf} & B \ar[d]^{1_B} \\
& A \ar[r]_{f} \ar@{}[ur]|(.3){}="1" \ar@{}[ur]|(.7){}="2" \ar@{=>}"1";"2"_{\psi} & B
}
\]
uniquely determined by the equations
\[
\left\{
\begin{array}{l}
p_0\overline{f}=d_0 \\
(Lf)\overline{f}=fd_1 \\
\psi\overline{f}=f\mu_A \\
\end{array}
\right.
\]
has a left adjoint with unit an identity.
\end{Proposition}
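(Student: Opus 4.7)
The plan is to deduce this statement from the Chevalley criterion for fibrations just proved, by passing to the 2-category $\cC^{co}$ obtained by reversing all 2-cells in $\cC$. Under this co-duality, an opcartesian lifting in $\cC$ of a 2-cell $\beta\colon fa\to b$ becomes a cartesian lifting in $\cC^{co}$ of the reversed $\beta$, so $f$ is an opfibration in $\cC$ if and only if $f$ is a fibration in $\cC^{co}$. Likewise, the comma object $f/B$ in $\cC$, equipped with $\psi_f\colon fd_0\to Lf$, becomes (up to the standard co-duality identification) the comma $B/f$ in $\cC^{co}$ with reversed 2-cell playing the role of $\varphi$, the monad $L$ corresponds to the monad $R$ on $(\cC^{co})/B$, and a left adjoint with identity unit in $\cC$ is exactly a right adjoint with identity counit in $\cC^{co}$.

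The central bookkeeping step is to verify that, under these identifications, the 1-cell $\overline{f}\colon A/A\to f/B$ defined by the equations
\[
p_0\overline{f}=d_0,\qquad (Lf)\overline{f}=fd_1,\qquad \psi\overline{f}=f\mu_A
\]
corresponds exactly to the 1-cell $\overline{f}\colon A/A\to B/f$ from the previous proposition applied in $\cC^{co}$. This uses the observation that the co-duality also swaps the roles of $d_0$ and $d_1$ on the symmetric comma $A/A$, because $\mu_A\colon d_0\to d_1$ becomes a 2-cell $d_1\to d_0$ in $\cC^{co}$. Once the projections and 2-cells are relabelled accordingly, the three defining equations for the opfibration $\overline{f}$ become precisely the three defining equations for the fibration $\overline{f}$ in $\cC^{co}$, so the universal property identifying $\overline{f}$ is preserved.

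With this translation in hand, the previously established Chevalley criterion applied in $\cC^{co}$ states: $f$ is a fibration in $\cC^{co}$ iff $\overline{f}$ admits a right adjoint in $\cC^{co}$ with counit an identity; dualising back to $\cC$, this reads: $f$ is an opfibration in $\cC$ iff $\overline{f}$ admits a left adjoint in $\cC$ with unit an identity, which is the statement to prove. The main obstacle is purely notational: carefully tracking how comma objects, the distinguished projections, and the direction of adjunction units/counits behave under the reversal of 2-cells, and in particular that finite 2-limits used in the fibration proof (comma objects and their 2-dimensional universal properties) are preserved by passage to $\cC^{co}$, since $\cC$ is finitely complete as a 2-category.

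Alternatively, one may avoid the abstract duality and write out a direct proof which mirrors line-by-line the argument given for fibrations: given an opcartesian lifting $\alpha\colon d_0\to a'$ of $\psi\colon fd_0\to Lf$, one builds the candidate left adjoint $\ell\colon f/B\to A/A$ by the universal property of $A/A$, shows $\overline{f}\ell=1$ so that the unit can be taken to be the identity, and then constructs the counit from the opcartesian universal property and verifies the triangle identities. The calculations are formally dual to those in the appendix and present no new difficulty.
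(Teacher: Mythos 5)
Your duality argument is correct and is exactly what the paper intends: the paper proves the Chevalley criterion in detail only for fibrations and states the opfibration version without proof, leaving it to the co-duality $\cC\mapsto\cC^{co}$ that you spell out (opfibrations become fibrations, $f/B$ becomes $B/f$, $d_0$ and $d_1$ on $A/A$ swap, and a left adjoint with identity unit becomes a right adjoint with identity counit). Your bookkeeping of the three defining equations under this translation is accurate, so the proposal matches the paper's (implicit) proof.
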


\end{document}